\newcommand{\PP}{\ensuremath{\mathbb{P}}}
\newcommand{\ZZ}{\ensuremath{\mathbb{Z}}}
\newcommand{\ra}{\ensuremath{\rightarrow}}
\def\hol{{\mathcal{O}}}
\newtheorem{teo}{Theorem}[section]
\newtheorem{lem}[teo]{Lemma}
\newtheorem{cor}[teo]{Corollary}
\newtheorem{prop}[teo]{Proposition}
\theoremstyle{definition}
\theoremstyle{remark}
\newtheorem{oss}[teo]{Remark}
\title[Rationality of moduli]{The rationality of certain  moduli spaces
  of curves of genus $3$} 
\author{Ingrid Bauer}
\address{Mathematisches Institut, Universit\"at Bayreuth, NW II, D-95440 Bayreuth; email:
Ingrid.Bauer@uni-bayreuth.de}
\author{Fabrizio Catanese}
\address{Mathematisches Institut, Universit\"at Bayreuth, NW II, D-95440 Bayreuth; email:
Fabrizio.Catanese@uni-bayreuth.de}
\date{\today}
\begin{document}

\thanks{The research of the authors was performed in the realm of the DFG Forschergruppe 790 
"Classification of algebraic surfaces and compact complex manifolds"}

\maketitle
%\tableofcontents
\pagestyle{myheadings}
%\markboth{$K_S^2=4$, $p_g=q=1$}{$K_S^2=4$, $p_g=q=1$}

\section*{Introduction} The aim of this paper is to give an explicit geometric description of the
birational structure of the moduli space of pairs $(C,\eta)$, where $C$ is a general curve of
genus $3$ over an algebraically closed field $k$ of arbitrary characteristic and $\eta \in
Pic^0(C)_3$ is a non trivial divisor  class of  3-torsion on $C$.

As it was observed in \cite{torelli} lemma (2.18), if $C$ is a general curve of genus $3$ and
$\eta \in Pic^0(C)_3$ is a non trivial $3$ - torsion divisor class, then we have a morphism
$\varphi_{\eta} := \varphi_{|K_C +
\eta|} \times \varphi_{|K_C - \eta|} : C \ra \PP^1 \times \PP^1$, corresponding to
 the sum of the linear systems $|K_C +
\eta|$ and $|K_C - \eta|$, which is  birational onto a curve $\Gamma \subset \PP^1 \times \PP^1$
of bidegree
$(4,4)$. Moreover, $\Gamma$  has exactly  six ordinary double points as singularities,
 located in the six points of the set $\mathcal{S}:=\{(x,y)|x \neq y, ~ x,y \in \{0, 1, \infty \}
\}$.

In \cite{torelli} we only gave an outline of the proof (and there is also a minor inaccuracy).
Therefore we dedicate the first section of this article to a detailed geometrical description of
such pairs $(C, \eta)$, where
$C$ is a general curve of genus $3$ and $\eta \in Pic^0(C)_3 \setminus \{0\}$. 

The main result of the first section is the following: 

\begin{teo}\label{thm1} Let $C$ be a general (in particular, non hyperelliptic) curve of genus $3$
over an algebraically closed field $k$ (of arbitrary characteristic) and $\eta \in Pic^0(C)_3
\setminus \{0\}$. 

Then the rational map $\varphi_{\eta} : C \rightarrow \PP^1 \times \PP^1$ defined by 
$$\varphi_{\eta} := \varphi_{|K_C +
\eta|} \times \varphi_{|K_C - \eta|} : C \ra \PP^1 \times \PP^1$$ is a morphism, birational onto
its image 
$\Gamma$, which is a curve of bidegree $(4,4)$ having exactly six ordinary double points as
singularities. We can assume, up to composing  $\varphi_{\eta}$ with a transformation of $\PP^1
\times \PP^1$ in $ \PP GL(2, k)^2$ ,  that the singular set of $\Gamma$ is the set 
$$\mathcal{S}:=\{(x,y) \in \PP^1 \times
\PP^1| x \neq y \ ; x,y \in \{0,1, \infty \}\}.$$

Conversely, if $\Gamma$ is a curve of bidegree $(4,4)$ in $\PP^1 \times \PP^1$, whose
singularities consist of exactly six ordinary double points at the points of $\mathcal{S}$, its
normalization $C$ is a curve of genus $3$, s.t. $\hol_C(H_2 - H_1) =: \hol_C(\eta)$ (where $H_1$,
$H_2$ are the respective pull backs of the rulings of
$\PP^1 \times \PP^1$) yields a non trivial $3$ - torsion divisor class, and
$\hol_C(H_1) \cong \hol_C(K_C + \eta)$, $\hol_C(H_2) \cong \hol_C(K_C - \eta)$.
\end{teo}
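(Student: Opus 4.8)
The plan is to prove the two implications separately: the forward one by Riemann--Roch together with the genericity of $C$, and the converse one by adjunction on the quadric $\PP^1\times\PP^1$, which needs no genericity at all. \emph{Forward direction.} Since $\eta\neq 0$ we have $h^0(\pm\eta)=0$, so Riemann--Roch gives $h^0(K_C+\eta)=h^0(K_C-\eta)=2$; both $|K_C\pm\eta|$ are thus pencils of degree $4$. A base point $p$ of $|K_C+\eta|$ would force $h^0(p-\eta)\geq 1$, i.e. $\eta\sim p-r$, putting $\eta$ on the difference surface $C-C\subset Pic^0(C)$; as $\dim(C-C)=2<3$, for general $C$ the finitely many nontrivial $3$-torsion classes avoid it, so the pencils are base point free and $\varphi_\eta$ is a morphism. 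Because $\pi_i\circ\varphi_\eta=\varphi_{|K_C\pm\eta|}$ has degree $4$, the image $\Gamma$ has bidegree $(4,4)$. For birationality note that $\deg(C\to\Gamma)$ divides $4$: a degree-$2$ factorization would give $\Gamma$ of bidegree $(2,2)$ and exhibit $C$ as hyperelliptic or bielliptic, while a degree-$4$ one would give $K_C+\eta\sim K_C-\eta$, i.e. $2\eta\sim 0$ and hence $\eta\sim 0$; both are impossible for general $C$. Finally $\varphi_\eta$ is an immersion, since it can fail to be one only at a common ramification point of the two pencils, and their ramification divisors (each of degree $12$ by Riemann--Hurwitz) are disjoint for general $C$.

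\emph{Count and position of the nodes.} Being an immersion that is birational onto $\Gamma$, with transverse identifications for general $C$, the map exhibits $\Gamma$ as a nodal curve with $p_a(\Gamma)=(4-1)^2=9$ and $p_g=g(C)=3$, hence exactly $6$ nodes. A node is a pair $\{p,q\}$, $p\neq q$, lying in a common divisor of each pencil; writing $A=p+q$ and applying Riemann--Roch to these degree-$2$ classes, the two conditions read $A-\eta\in W_2$ and $A+\eta\in W_2$, where $W_2\subset Pic^2(C)$ is the theta divisor. Thus the nodes correspond to $A\in W_2\cap(W_2+\eta)\cap(W_2-\eta)$, and since these translates are numerically equivalent the count is confirmed by $W_2^3=3!=6$. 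The residual pair $\{r,s\}$ of $\{p,q\}$ in its $|K_C+\eta|$-divisor has class $K_C+\eta-A$ and is again a node-pair, because $(K_C-\eta)-(K_C+\eta-A)\sim A+\eta$ is effective by the second node condition; this ``vertical'' residuation is a fixed point free involution pairing the six nodes on three fibres of $\pi_1$, and symmetrically a ``horizontal'' involution pairs them on three fibres of $\pi_2$. The product of the two involutions is translation by $\eta$, of order $3$, so together they generate a regular $S_3$-action on the six nodes; an elementary count then forces the nodes to occupy six of the nine points of the $3\times 3$ grid cut out by these fibres, two in each row and column, the three missing points forming a diagonal. Composing $\varphi_\eta$ with the element of $\PP GL(2,k)^2$ that sends the three vertical and three horizontal fibres to $\{0,1,\infty\}$ and the diagonal to $\{(0,0),(1,1),(\infty,\infty)\}$ puts the six nodes exactly at $\mathcal{S}$.

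\emph{Converse.} The genus formula gives $g(C)=p_a(\Gamma)-6=9-6=3$. By adjunction $\omega_\Gamma=\hol_\Gamma(2,2)$, so on the normalization $K_C\sim 2H_1+2H_2-\sum_{i=1}^6(p_i+q_i)$, where $\{p_i,q_i\}=\nu^{-1}(\text{node}_i)$. The three horizontal fibres $y\in\{0,1,\infty\}$ pass through all six nodes (two each); since $H_2\cdot\Gamma=4$, each node contributes local intersection multiplicity at least $2$ (the fibre is not a component of $\Gamma$), so these fibres meet $\Gamma$ only at the nodes and transversally, and their total pullback is exactly $\sum(p_i+q_i)$, giving $\sum(p_i+q_i)\sim 3H_2$, and symmetrically $\sim 3H_1$. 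Substituting yields $K_C\sim 2H_1-H_2\sim 2H_2-H_1$, i.e. $H_1\sim K_C+\eta$ and $H_2\sim K_C-\eta$ with $\eta:=H_2-H_1$, while $3\eta\sim 3H_2-3H_1\sim 0$. Finally $\eta\neq 0$: were $\eta\sim 0$ we would have $H_1\sim H_2\sim K_C$, so $\varphi_\eta$ would be projection of the canonical model from two points, for which the six identified pairs are the $\binom{4}{2}$ pairs cut out on the single line through the two centres and hence collapse to one point of $\PP^1\times\PP^1$, contradicting the six distinct nodes at $\mathcal{S}$.

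\emph{Main obstacle.} I expect the hard part to be the forward configuration statement in arbitrary characteristic: ensuring that the six nodes are distinct, reduced and transverse, and that the two residuation involutions act without fixed points (so that the induced $S_3$-action is regular and the grid configuration is forced), requires the genericity of $C$ to be exploited with care, and the small characteristics $2$ and $3$---where half-periods and the $3$-torsion degenerate---may demand separate treatment. The converse, by contrast, is robust and characteristic-free.
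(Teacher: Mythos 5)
Your converse direction is sound and is essentially the paper's own argument: genus $=p_a(\Gamma)-6=3$, adjunction giving $K_C\sim 2H_1+2H_2-\sum(p_i+q_i)$, pulling back the three vertical and three horizontal fibres through the nodes to get $3H_1\sim\sum(p_i+q_i)\sim 3H_2$, and excluding $\eta\equiv 0$ because the map would then contract the four collinear points of the canonical quartic to a single point of multiplicity $4$ (compare proposition (\ref{4uple}) and the final lemma of Section 1). The forward direction, however, has genuine gaps exactly where the content of the theorem lies. First, your base-point-freeness argument is invalid as stated: the inequality $\dim(C-C)=2<3$ compares a fixed surface with a fixed finite set inside a single Jacobian, and no dimension count inside one abelian variety can show the two are disjoint (for a hyperelliptic curve, many $2$-torsion classes do lie on $C-C$; nothing in your reasoning distinguishes $3$-torsion). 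What is needed is a count in moduli, which is what the paper does: curves carrying $P\neq P'$ with $3P\equiv 3P'$ --- equivalently, plane quartics with two inflectional tangents meeting on the curve --- form a $5$-dimensional subset of the $6$-dimensional $\mathcal{M}_3$. Second, the assertions that $\varphi_\eta$ is an immersion, that all identifications are transverse (so that the singularities are ordinary nodes), that the intersection $W_2\cap(W_2+\eta)\cap(W_2-\eta)$ is reduced, and that the two residuation involutions are fixed-point free are each invoked ``for general $C$'' with no proof; but these are precisely the statements the theorem asserts about the general curve, so they cannot simply be assumed --- one must show the bad locus is a proper closed subset. (Also, your claimed \emph{regular} $\mathfrak{S}_3$-action needs the third involution $A\mapsto K_C-A$ to be fixed-point free, whose fixed points are effective theta-characteristics --- these exist on every curve, so this needs an argument; fortunately regularity is not needed, only fixed-point-freeness of the two residuations.)

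By contrast, the paper replaces all of these genericity claims by structural arguments valid for \emph{every} pair satisfying three checkable assumptions (base-point-freeness, birationality, only double points): a point of multiplicity $4$ forces $\eta\equiv 0$; a triple point forces $C$ into a $\leq 5$-dimensional family via a trigonal pencil with two coincident ramification points; and in the remaining case the unique divisor $G\in|3H_1+3H_2-2\sum_{i=1}^6 E_i|$ (unique because $\hol_C(3H_1+3H_2-2\sum E_i)\cong\hol_C$ and $h^1(S,\hol_S(-H_1-H_2))=0$) is proved to be reduced with three connected components, each the strict transform of a vertical plus a horizontal line; this single argument yields simultaneously that the six double points are distinct (no infinitely near points), ordinary, and in grid position. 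Your description of the nodes as $W_2\cap(W_2+\eta)\cap(W_2-\eta)$, with the two residuation involutions whose product is translation by $\eta$, is an elegant and genuinely different route to the same grid configuration, and could plausibly be completed by dimension counts in moduli for each degeneration you list; but as written --- and as you yourself concede in the ``main obstacle'' paragraph --- those counts are exactly the missing hard part, so the proposal is a program rather than a proof.
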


From theorem (\ref{thm1}) it follows that 

$\mathcal{M}_{3,\eta}:= \{(C, \eta): C$ is a general curve of genus $3, ~ \eta \in Pic^0(C)_3
\setminus \{0\}\}$ is birational to $\PP(V(4,4, - \mathcal{S})) /\mathfrak{S}_3$, where 
$$ V(4,4, - \mathcal{S}):= H^0(\hol_{\PP^1 \times \PP^1}(4,4)(- 2\sum_{a\neq b, a,b \in \{\infty,
0,1\}} (a,b)).
$$   In fact, the permutation action of the symmetric group 

$\mathfrak{S}_3 := \mathfrak{S}(\{\infty, 0,1\})$ extends to an action on $\PP^1$, so
$\mathfrak{S}_3$ is naturally a subgroup of $\PP GL(2,k)$.  We consider then the diagonal action
of $\mathfrak{S}_3$ on $\PP^1 \times \PP^1$, and observe that $\mathfrak{S}_3$ is exactly the
subgroup of $\PP GL(2,k)^2$ leaving the set $\mathcal S$ invariant.  The action of
$\mathfrak{S}_3$ on $V(4,4, - \mathcal{S})$ is naturally induced by the diagonal inclusion  
$\mathfrak{S}_3 \subset \PP GL(2,k)^2$ .

On the other hand, if we consider only the subgroup of order three of $Pic^0(C)$  generated by a
non trivial $3$ - torsion element $\eta$, we see from theorem (\ref{thm1}) that we have to allow
the exchange of $\eta$  with $- \eta$, which  corresponds to exchanging the two factors of $\PP^1
\times \PP^1$. 

Therefore 
$\mathcal{M}_{3,\langle \eta \rangle}:= \{(C, \langle \eta \rangle): C$ general curve of genus
$3, ~ \langle \eta \rangle \cong \mathbb{Z}/3 \mathbb{Z} \subset Pic^0(C)\}$ is birational to
$\PP(V(4,4, - \mathcal{S})/(\mathfrak{S}_3 \times\mathbb{Z}/2)$, where the action of the generator
$\sigma$ (of $\mathbb{Z}/2\mathbb{Z}$) on
$V(4,4, - \mathcal{S})$ is induced by the action on $\PP^1 \times \PP^1$ obtained by exchanging
the two coordinates.

Our main result is the following:

\begin{teo} Let $k$ be an algebraically closed field of arbitrary characteristic. We have:

1) the moduli space $\mathcal{M}_{3,\eta}$ is rational;

2) the moduli space $\mathcal{M}_{3,\langle \eta \rangle}$ is rational.
\end{teo}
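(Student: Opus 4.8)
By Theorem~\ref{thm1} and the discussion following it, it suffices to prove that $\PP(V)/\mathfrak{S}_3$ and $\PP(V)/(\mathfrak{S}_3\times\mathbb{Z}/2)$ are rational, where $V:=V(4,4,-\mathcal{S})$ is the $7$-dimensional space of bidegree-$(4,4)$ forms with a double point at each of the six points of $\mathcal S$, so $\dim\PP(V)=6$. The engine of the argument is the \emph{no--name lemma}: if a finite group $G$ acts generically freely on an irreducible variety $Y$ and $\mathcal E\to Y$ is a $G$-linearized vector bundle, then $\mathcal E/G$ is birational to $(Y/G)\times\mathbb{A}^{\operatorname{rk}\mathcal E}$. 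Applied to a $G$-stable splitting $V=V_0\oplus V_1$ (or, more generally, a $G$-equivariant surjection $V\twoheadrightarrow V_0$), projection away from $\PP(V_1)$ exhibits $\PP(V)$ as a $G$-linearized affine bundle over $\PP(V_0)$; hence, \emph{provided $G$ acts generically freely on $\PP(V_0)$}, one obtains
\[
\PP(V)/G \ \sim\ \bigl(\PP(V_0)/G\bigr)\times\mathbb{A}^{\dim V_1}.
\]
The task then reduces to choosing a small $G$-summand $V_0$ on which the projective action is generically free and whose quotient $\PP(V_0)/G$ is visibly rational.

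First I would compute the $\mathfrak{S}_3$-module structure of $V$ explicitly: fixing affine coordinates $(x,y)$ on $\PP^1\times\PP^1$, I write down a basis of the $7$-dimensional space of $(4,4)$-forms singular at $\mathcal S$ and record the action of the generators $x\mapsto 1-x$ and $x\mapsto 1/(1-x)$ (acting diagonally), together with the swap $\sigma\colon(x,y)\mapsto(y,x)$. For part~(1) it then suffices to exhibit the $2$-dimensional standard representation $U$ of $\mathfrak{S}_3$ as a direct summand (equivalently, a quotient) of $V$. Since $\mathfrak{S}_3\hookrightarrow\PP GL(2,k)$ acts faithfully, hence generically freely, on $\PP(U)=\PP^1$, the reduction gives
\[
\PP(V)/\mathfrak{S}_3 \ \sim\ \bigl(\PP^1/\mathfrak{S}_3\bigr)\times\mathbb{A}^{5}\ =\ \PP^1\times\mathbb{A}^{5},
\]
which is rational, because $\PP^1/\mathfrak{S}_3\cong\PP^1$ by L\"uroth's theorem (valid in every characteristic).

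For part~(2) the same device must be adapted: the nontrivial element $\sigma$ acts as a scalar on \emph{any} two-dimensional $G$-summand, hence trivially on its projectivization, so no one-dimensional base $\PP(V_0)$ can carry a generically free $G$-action. Instead I would select a \emph{three}-dimensional summand $V_0=U\oplus\chi$, where $U$ is a copy of the standard representation lying in the $\sigma$-symmetric part of $V$ and $\chi$ is a one-dimensional $\mathfrak{S}_3$-representation in the $\sigma$-antisymmetric part; mixing the two $\mathbb{Z}/2$-weights makes $G$ act generically freely on $\PP(V_0)=\PP^2$. The reduction then yields $\PP(V)/G\sim(\PP^2/G)\times\mathbb{A}^{4}$, and in the affine chart complementary to $\chi$ the action becomes linear on $\mathbb{A}^2$: $\mathfrak{S}_3$ acts through its reflection (standard) representation and $\sigma$ acts as $-\mathrm{id}$, so they generate the dihedral group $\Gamma\cong\mathfrak{S}_3\times\{\pm\mathrm{id}\}$ of order $12$. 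Being a two-dimensional reflection group, $\Gamma$ has a polynomial ring of invariants by Chevalley--Shephard--Todd, whence $\mathbb{A}^2/\Gamma\cong\mathbb{A}^2$; thus $\PP^2/G$ is rational and $\PP(V)/G\sim\mathbb{A}^{6}$.

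The main obstacle is twofold and concentrated in the explicit equivariant linear algebra and in the small characteristics. The decomposition of $V(4,4,-\mathcal S)$ must be carried out by hand, the required summands ($U$ for part~(1), and $U\oplus\chi$ with the stated $\sigma$-weights for part~(2)) verified to exist, and the resulting actions on $\PP^1$ and $\PP^2$ checked to be genuinely generically free. In characteristics $2$ and $3$ both pillars of the clean argument weaken: Maschke's theorem fails, so the existence of an actual \emph{direct summand} (rather than a mere subquotient) is not automatic, and the identification $\mathbb{A}^2/\Gamma\cong\mathbb{A}^2$ is unavailable once $|\Gamma|=12$ is not invertible. I therefore expect these two primes to require a separate, more explicit treatment --- computing invariant functions directly and exhibiting a purely transcendental generating set --- in order to secure rationality uniformly in arbitrary characteristic, as asserted.
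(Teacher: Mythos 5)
Your argument in characteristic different from $2$ and $3$ is correct, and it is a genuinely different route from the paper's: it is precisely the abstract Bogomolov--Katsylo method (cf. \cite{bogkat}) that the authors mention in the introduction and deliberately decline to use, preferring to compute the invariant function fields explicitly. Granting the decomposition $\mathbb{V} \cong 2(\mathbb{I}\oplus W)\oplus \mathfrak{A}$ (which the paper establishes and you would verify by hand), your two reductions work: $\mathfrak{S}_3$ acts faithfully, hence generically freely, on $\PP(W)\cong\PP^1$, so the no-name lemma plus L\"uroth gives part 1; for part 2 the summand $W\oplus\mathfrak{A}$ carries a faithful action of $\mathfrak{S}_3\times\ZZ/2\ZZ$ (Lemma \ref{tau'} of the paper confirms that the coordinate swap acts by $-1$ exactly on $\mathfrak{A}$ and trivially on the rest), the induced linear group on the affine chart is the dihedral reflection group of order $12$, and Chevalley--Shephard--Todd applies. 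Compared with the paper, your argument is shorter and more conceptual, while the paper's yields explicit generators of the invariant fields --- and, decisively, works in every characteristic.

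That last point is the genuine gap: the theorem is asserted for arbitrary characteristic, and your proposal handles characteristics $2$ and $3$ only by declaring that they ``require a separate, more explicit treatment'', which you do not supply. These cases cannot be absorbed into the clean argument. In characteristic $3$ the paper's decomposition is $\mathbb{V}\cong 2\mathbb{W}\oplus\mathfrak{A}$ with $\mathbb{W}$ the indecomposable permutation module, so by Krull--Schmidt $\mathbb{V}$ has no $2$-dimensional direct summand at all; one can fall back on the quotient $\mathbb{W}\twoheadrightarrow\mathbb{W}/\mathbb{I}$ and the affine-bundle version of no-name for part 1, but for part 2 the group acting on $\mathbb{A}^2$ still has order $12$, divisible by $3$, so Chevalley--Shephard--Todd is unavailable and a different rationality argument is needed (e.g.\ Castelnuovo--Zariski for separably unirational surfaces, or the paper's explicit invariants $\sigma_1,\ldots,\sigma_7$). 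In characteristic $2$ your part-2 construction collapses at the first step: there is no $\sigma$-antisymmetric line whatsoever, because the swap involution is unipotent in characteristic $2$ --- the paper computes $\sigma\colon w\mapsto w+w_1+w_2+w_3$ --- so the sought summand $U\oplus\chi$ does not exist and the reduction to a reflection group cannot even be set up. The paper closes exactly these holes with the explicit computations of the two subsections on characteristic $3$ and characteristic $2$ (the latter via Proposition \ref{invchar2}, resting on \cite{fabrat}, lemma (2.8)); without an analogue of those, your proposal proves the theorem only when the characteristic is different from $2$ and $3$.
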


One could obtain the above result  abstractly from the method of Bogomolov and Katsylo
 (cf. \cite{bogkat}), but we prefer to prove the theorem while   explicitly calculating the field
of invariant functions. It mainly suffices to
 decompose the vector representation of $\mathfrak{S}_3$ on
$V(4,4, - \mathcal{S})$ into irreducible factors. Of course, if the characteristic of $k$ equals
two or three, it is no longer possible to decompose the $\mathfrak{S}_3$ - module $V(4,4, -
\mathcal{S})$ as a direct sum of irreducible submodules. Nevertheless, we can write down the field
of invariants and see that it is rational.

%\section*{Notation}

\section{The geometric description of pairs $(C, \eta)$.} In this section we give a geometric
description of pairs $(C,
\eta)$, where $C$ is a general curve of genus $3$ and $\eta$ is a non trivial element of
$Pic^0(C)_3$, and we prove theorem (\ref{thm1}).

Let $k$ be an  algebraically closed field of arbitrary characteristic.   We recall the following
observation from \cite{torelli}, p.374.

\begin{lem} Let $C$ be a general  curve of genus $3$ and $\eta \in Pic^0(C)_3$ a non trivial
divisor class (i.e., $\eta$ is not linearly equivalent to $0$). Then the linear system $|K_C +
\eta|$ is base point free. This holds more precisely under the assumption that the canonical
system $|K_C|$ does not contain two divisors of the form $ Q + 3 P$, $Q + 3 P'$,  and where the
3-torsion divisor class $P-P' $  is the class of $\eta$. This condition for all such $\eta$ is
 in turn equivalent to the fact that  $C$ is either hyperelliptic  or it is non hyperelliptic but
the canonical image
$\Sigma$ of $C$ does not admit two inflexional tangents meeting in a point $Q$ of $\Sigma$. 
\end{lem}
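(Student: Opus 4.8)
The plan is to reduce the whole statement to a Riemann--Roch computation for the degree-$4$ class $K_C + \eta$ and then to read off the failure of base-point-freeness on the canonical image. First I would fix the numerics: since $\deg(K_C + \eta) = 4$ and, by Serre duality, $h^1(K_C + \eta) = h^0(-\eta) = 0$ (because $-\eta$ is a nontrivial class of degree $0$, hence non-effective), Riemann--Roch gives $h^0(K_C + \eta) = 2$, so $|K_C + \eta|$ is a pencil $g^1_4$. For the base-point analysis, a point $P$ is a base point exactly when $h^0(K_C + \eta - P) = h^0(K_C + \eta) = 2$. Applying Riemann--Roch and Serre duality to the degree-$3$ class $K_C + \eta - P$ yields $h^0(K_C + \eta - P) = 1 + h^0(P - \eta)$, so $P$ is a base point precisely when $P - \eta$ is effective, i.e. when $\eta \sim P - P'$ for some point $P'$ (with $P \neq P'$, since $\eta \neq 0$).

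Now I would bring in the hypothesis that $\eta$ is $3$-torsion: from $3\eta \sim 0$ we get $3P \sim 3P'$, so the class of $3P$ carries the two distinct effective divisors $3P$ and $3P'$, whence $h^0(3P) \geq 2$. As this class is special, Clifford's theorem forces $h^0(3P) = 2$, which in the non-hyperelliptic case means exactly that $P$ is an inflection point of the canonical quartic $\Sigma \subset \PP^2$, with $K_C \sim 3P + Q$ for a residual point $Q$; since $3P' \sim 3P$, the same point $Q$ satisfies $K_C \sim 3P' + Q$. Thus a base point of $|K_C + \eta|$ produces two canonical divisors $Q + 3P$ and $Q + 3P'$ with $[P-P'] = \eta$, that is, two inflexional tangents of $\Sigma$ meeting at the point $Q \in \Sigma$; conversely, such a pair of divisors makes $P$ a base point. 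This establishes the asserted sufficient condition and its converse for a fixed $\eta$.

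It then remains to handle the two clauses of the global equivalence. For the hyperelliptic case I would use that the canonical map is the composition of the hyperelliptic $g^1_2$ with the embedding of $\PP^1$ as a smooth conic, so every effective canonical divisor is invariant under the hyperelliptic involution $\iota$; invariance of $Q + 3P$ (where $P$ is the unique triple point) forces $\iota P = P$, i.e. $P$ and likewise $P'$ are Weierstrass points, whence $2P \sim g^1_2 \sim 2P'$, and together with $3P \sim 3P'$ this gives $P = P'$, contradicting $\eta \neq 0$. Hence the bad configuration never occurs for hyperelliptic $C$ (equivalently, a smooth conic has no flexes). For the non-hyperelliptic case the condition is literally the absence of two inflexional tangents of $\Sigma$ through a common point of $\Sigma$; I would observe that this is a proper closed condition on the space of plane quartics --- a general quartic has $24$ distinct flexes whose tangents are concurrent only off the curve --- so the generic curve satisfies it and the first sentence of the lemma follows.

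The step I expect to be the main obstacle is the characteristic-free treatment of inflection points and of Clifford's bound, since the paper insists on arbitrary characteristic: in characteristics $2$ and $3$ the Hessian and the usual flex count degenerate, so I would phrase ``$P$ is a flex'' intrinsically as $h^0(3P) = 2$, equivalently $K_C \sim 3P + Q$, rather than through the vanishing of a Hessian, and check that Clifford's theorem and geometric Riemann--Roch --- which hold over any algebraically closed field --- are all the argument actually uses. A secondary point to pin down is the genericity claim in the non-hyperelliptic case, which is safest to settle by exhibiting a single quartic with no two inflexional tangents concurrent on the curve and invoking openness, rather than by a naive dimension count that could be fragile in small characteristic.
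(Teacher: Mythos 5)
Your reduction of base-point-freeness to the existence of the two canonical divisors $Q+3P$, $Q+3P'$ is exactly the paper's argument: the same Serre duality/Riemann--Roch computation shows that $P$ is a base point if and only if $\eta \equiv P - P'$ for some point $P'$, and then $3\eta \equiv 0$ gives $3P \equiv 3P'$, $h^0(3P) \geq 2$, and a residual point $Q$ with $Q + 3P \equiv Q+3P' \equiv K_C$. Your hyperelliptic argument (both $P, P'$ are Weierstrass points, so $2P \equiv 2P'$, which together with $3P \equiv 3P'$ forces $P = P'$) also matches the paper's. Your appeal to Clifford's theorem to pin down $h^0(3P) = 2$ exactly is harmless but unnecessary: only $h^0(3P)\geq 2$ is used.

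The genuine gap is the genericity claim for non-hyperelliptic curves, which is the one step you do not actually prove. Your first justification --- that a general quartic has $24$ distinct flexes whose tangents are concurrent only off the curve --- is precisely the statement to be established, so it is circular; moreover the flex count via the Hessian degenerates in characteristics $2$ and $3$ (in characteristic $3$ there exist smooth quartics, e.g.\ the Fermat quartic, all of whose points are inflection points). Your fallback, ``exhibit one good quartic and invoke openness,'' has two problems. First, no such quartic is exhibited, and verifying the condition for an explicit quartic in characteristics $2$ and $3$ is itself a nontrivial task. Second, the openness is unjustified: the bad locus is the image of an incidence variety carrying the open condition $P \neq P'$, hence it is constructible but not obviously closed --- in a degenerating family the two flexes can collide, and the limit quartic (say one with a hyperflex) may well be good, so the good locus need not be open, and a single good example does not imply density. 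The paper closes this gap with a dimension count that is characteristic-free because every condition involved is linear in the coefficients of the quartic: fixing non-collinear points $p, q, p'$, passing through $q$ is one linear condition, and each flex condition (intersection multiplicity $3$ with the line $pq$ at $p$, respectively with $p'q$ at $p'$) gives three linear conditions, all seven independent; hence the marked bad quartics form a linear system of dimension $14-7=7$, and since the stabilizer of $(p,q,p')$ in $\Aut(\PP^2)$ has dimension $2$, the bad curves sweep out an at most $5$-dimensional subset of the $6$-dimensional $\mathcal{M}_3$ (with a $4$-dimensional family in the degenerate case $P=Q$). You would need either to reproduce this count or to supply a genuine substitute for it.
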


\begin{proof} Note that
$P$ is a base point of the linear system $|K_C + \eta|$ if and only if
$$ H^0(C, \mathcal{O}_C(K_C + \eta)) = H^0(C, \mathcal{O}_C(K_C + \eta - P)).
$$ Since $dim H^0(C, \mathcal{O}_C(K_C + \eta)) = 2$ this is equivalent to
$$dim H^1(C, \mathcal{O}_C(K_C + \eta - P)) = 1.$$

Since $H^1(C, \mathcal{O}_C(K_C + \eta - P)) \cong H^0(C, \mathcal{O}_C(P -
\eta))^*$, this is equivalent to the existence of a point $P'$ such that $P - \eta
\equiv P'$ (note that we denote linear equivalence by the classical notation ``$\equiv$''.)
Therefore $3P \equiv 3P'$ and $P \neq P'$, whence in particular $H^0(C, \mathcal{O}_C(3P)) \geq
2$. By Riemann - Roch we have
$$ dim H^0(C, \mathcal{O}_C(K_C - 3P)) =
$$
$$ deg(K_C - 3P) + 1 - g(C) + dim H^0(C, \mathcal{O}_C(3P) \geq 1.
$$ In particular, there is a point $Q$ such that $Q \equiv K_C - 3P \equiv K_C - 3P'$. \\

Going backwards, we see that this condition is not only necessary, but sufficient. If $C$ is
hyperelliptic, then $  Q + 3 P, Q + 3 P' \in |K_C| $ hence $P, P'$  are Weierstrass points, whence
$ 2 P \equiv 2 P'$, hence $P- P'$ yields a divisor class  $\eta$ of 2-torsion,  contradicting the
nontriviality of $\eta$.

Consider now the canonical embedding of  $C$  as a plane quartic $\Sigma$.
 Our condition means, geometrically, that
$C$ has two inflection points $P$, $P'$, such that the tangent lines to these points intersect in
$Q
\in C$. 

We shall show now that the (non hyperelliptic) curves of genus three  whose canonical image is a
quartic $\Sigma$ with the above properties are contained in a five dimesnional family, whence are
special in the moduli space
$\mathcal{M}_3$ of curves of genus three.

Let now $p$, $q$, $p'$ be three non collinear points in
$\mathbb{P}^2$. The quartics in $\mathbb{P}^2$ form a linear system of dimension $14$. Imposing
that a plane quartic contains the point $q$ is one linear condition. Moreover, the condition that
the line containing $p$ and
$q$ has intersection multiplicity equal to 3   with the quartic in the point $p$ gives three
further  linear conditions.  Similarly for the point $p'$,   and it is easy to see that the above
seven linear conditions are independent. Therefore the linear subsystem of quartics $\Sigma$
having two inflection points 
$p$, $p'$, such that the tangent lines to these points intersect in $q \in \Sigma$ has dimension
$14 - 3 - 3 - 1 = 7$. The group of automorphisms of $\mathbb{P}^2$ leaving the three points $p$,
$q$, $p'$ fixed has
  dimension $2$ and therefore the above quartics give rise to a five dimensional algebraic subset
of
$\mathcal{M}_3$.

Finally, if the points $P, P', Q$ are not distinct, we have (w.l.o.g.) $ P = Q$ and a similar
calculation shows that we have a family of dimension $ 7 -  3 = 4$.
\end{proof}

Consider now the morphism 
$$
\varphi_{\eta} (:= \varphi_{|K_C + \eta|} \times \varphi_{|K_C - \eta|})  : C \rightarrow \PP^1
\times \PP^1,
$$ and denote by $\Gamma \subset \PP^1 \times \PP^1$ the image of $C$ under $\varphi_{\eta}$.

\begin{oss} 1) Since $\eta$ is non trivial,  either $\Gamma$ is of bidegree $(4,4)$, or
deg$\varphi_{\eta} = 2$ and $\Gamma$ is of bidegree $(2,2)$.
 In fact deg $\varphi_{\eta} = 4$ implies $\eta \equiv - \eta$.

2) We shall  assume in the following that $\varphi_{\eta}$ is birational, since otherwise $C$ is
either hyperelliptic (if $\Gamma$ is singular) or $C$ is a double cover of an elliptic curve
$\Gamma$
 (branched in $4$ points). 

In both cases $C$ lies in a $5$ - dimensional subfamily of the moduli space $\mathcal{M}_3$ of
curves of genus $3$.
\end{oss}

Let $P_1, \ldots, P_m$ be the (possibly infinitely near) singular points of $\Gamma$, and let
$r_i$ be the multiplicity in $P_i$ of the proper transform of $\Gamma$.  Then, denoting by  $H_1$,
respectively
$H_2$, the  divisors of a vertical, respectively of a horizontal line  in $\PP^1 \times \PP^1$, 
we have that $\Gamma
\in |4H_1 + 4H_2 - \sum_{i=1}^m r_i P_i|$. By adjunction, the canonical system of $\Gamma$ is cut
out by $|2H_1 + 2H_2 - \sum_{i=1}^m (r_i-1) P_i|$, and therefore 
$$ 4 = degK_C = \Gamma \cdot (2H_1 + 2H_2 - \sum_{i=1}^m (r_i-1) P_i) = 16 -
\sum_{i=1}^mr_i(r_i-1).
$$ Hence $\sum_{i=1}^mr_i(r_i-1) = 12$, and we have the following possibilities

\bigskip

\begin{tabular}{|c|c|c|}
\hline & $m$& $(r_1,\ldots, r_m)$\\
\hline\hline i) &1& (4) \\
\hline ii)&2 & (3,3) \\
\hline iii) & 4 & (3,2,2,2) \\
\hline iv) &6 & (2,2,2,2,2,2)\\
\hline
\end{tabular}

\bigskip

We will show now that for a general curve only the last case occurs, i.e., $\Gamma$ has exactly
$6$ singular points of multiplicity $2$.

We denote by $S$ the blow up of $\PP^1 \times \PP^1$ in $P_1, \ldots , P_m$, and let $E_i$ be the
exceptional divisor of the first kind, total transform of the point $P_i$.

We shall first show that the first case (i.e., $m = 1$) corresponds to the case $\eta \equiv 0$.

\begin{prop}\label{4uple} Let $\Gamma \subset \PP^1 \times \PP^1$ a curve  of bidegree $(4,4)$
having a point $P$ of multiplicity $4$, such that its normalization $C \in |4H_1 + 4H_2 - 4E|$ has
genus $3$ (here, $E$ is the exceptional divisor of the blow up of $\PP^1 \times \PP^1$ in $P$.)
Then 
$$
\mathcal{O}_C(H_1) \cong \mathcal{O}_C(H_2) \cong \mathcal{O}_C(K_C).
$$
\end{prop}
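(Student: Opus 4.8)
The plan is to carry out every computation on the blow-up $S$ and then restrict to $C$ by adjunction. First I would record the intersection numbers on $S$: one has $H_1^2 = H_2^2 = 0$, $H_1\cdot H_2 = 1$, $E^2 = -1$ and $E\cdot H_1 = E\cdot H_2 = 0$, while $K_S = -2H_1 - 2H_2 + E$. Since $C \equiv 4H_1 + 4H_2 - 4E$, adjunction gives
\[ K_C = (K_S + C)|_C = (2H_1 + 2H_2 - 3E)|_C. \]
A quick consistency check: the arithmetic genus of the class $4H_1 + 4H_2 - 4E$ equals $1 + \tfrac12 (4H_1+4H_2-4E)\cdot(2H_1+2H_2-3E) = 3$, so the single blow-up already resolves $P$ (an ordinary quadruple point), $C$ is the smooth proper transform, and $E\cdot C = 4$.

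The key step is to show that $H_1|_C \equiv E|_C$ and, symmetrically, $H_2|_C \equiv E|_C$. For this I would look at the vertical line $V$ through $P$. Its proper transform on $S$ is $\tilde V \equiv H_1 - E$, and a direct computation gives
\[ \tilde V\cdot C = (H_1 - E)\cdot(4H_1+4H_2-4E) = 4 - 4 = 0, \]
the only nonzero contributions being $H_1\cdot 4H_2 = 4$ and $(-E)\cdot(-4E) = 4E^2 = -4$. Geometrically this expresses that the four points of $V\cap\Gamma$ are all absorbed into the quadruple point $P$. Now restrict the linear equivalence $H_1 \equiv \tilde V + E$ on $S$ to $C$: one gets $H_1|_C \equiv \tilde V|_C + E|_C$, and since $\tilde V|_C$ is an effective divisor of degree $\tilde V\cdot C = 0$ it is the zero divisor, whence $H_1|_C \equiv E|_C$. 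The same argument applied to the horizontal line through $P$ yields $H_2|_C \equiv E|_C$.

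It then only remains to substitute these two relations into the adjunction formula:
\[ K_C \equiv (2H_1 + 2H_2 - 3E)|_C \equiv (2+2-3)\,E|_C = E|_C \equiv H_1|_C \equiv H_2|_C, \]
which is exactly the assertion $\hol_C(H_1)\cong\hol_C(H_2)\cong\hol_C(K_C)$. The one point requiring a little care --- and the only possible obstacle --- is the vanishing $\tilde V|_C = 0$: the intersection number $\tilde V\cdot C = 0$ forces the restricted divisor to vanish precisely because it is effective, i.e. because $\tilde V$ is not a component of $C$, which holds since $C$ is irreducible (being the normalization of $\Gamma$) and lies in a different class. Everything else is just the bookkeeping of intersection numbers on $S$.
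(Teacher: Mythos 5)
Your proof is correct, and its second half takes a genuinely different route from the paper's. The first step coincides: like the paper, you observe that the proper transform of the vertical (resp.\ horizontal) line through $P$ has zero intersection with $C$, hence is disjoint from it, giving $\hol_C(H_1)\cong\hol_C(E)\cong\hol_C(H_2)$. But from there the paper argues via linear series: it first excludes $h^0(C,\hol_C(H_1))=2$ by noting that otherwise the two projections would induce the same pencil on $C$, forcing $\Gamma$ to be the graph of a projectivity of $\PP^1$ (impossible for bidegree $(4,4)$); then, from $h^0(C,\hol_C(H_1))\geq 3$ and Riemann--Roch on the genus-$3$ curve $C$, it deduces $h^0(C,\hol_C(K_C-H_1))\geq 1$, which together with $\deg(K_C-H_1)=0$ gives $K_C\equiv H_1$. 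You instead bypass all of this with adjunction: writing $K_C=(K_S+C)|_C=(2H_1+2H_2-3E)|_C$ and substituting $H_1|_C\equiv H_2|_C\equiv E|_C$ in $\mathrm{Pic}(C)$ yields $K_C\equiv E|_C$ at once. Your route is shorter and purely computational, avoiding the case analysis about coincident pencils entirely; the paper's route, on the other hand, establishes along the way the stronger facts that $h^0(C,\hol_C(H_i))=3$ and that each projection cuts out a pencil inside the canonical system --- an argument pattern the paper explicitly reuses later (in the lemma showing that $\eta$ is nontrivial for curves with six nodes at $\mathcal{S}$). Both arguments are valid in arbitrary characteristic.
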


In particular, if $\Gamma = \varphi_{\eta}(C)$, (i.e., we are in the case $m=1$) then $\eta \equiv
0$.

\begin{oss} Let $\Gamma$ be as in the proposition. Then the rational map $\PP^1 \times \PP^1
\dashrightarrow
\PP^2$ given by $|H_1 + H_2 -E|$ maps $\Gamma$ to a plane quartic.  Viceversa, given a plane
quartic $C'$, blowing up two points $p_1, p_2 \in (\PP^1 \times \PP^1) \setminus C'$,  and then
contracting the strict transform of the line through $p_1, p_2$, yields a curve $\Gamma$ of
bidegree $(4,4)$  having a singular point of multiplicity $4$.
\end{oss}

\begin{proof}[Proof (of the proposition)]  Let $H_1$ be the full transform of a vertical line
through $P$. Then there is an effective divisor $H_1'$ on the blow up $S$ of $\PP^1 \times \PP^1$
in $P$
 such that $H_1 \equiv H_1' + E$. Since $H_1 \cdot C = E \cdot C = 4$, $H_1'$ is disjoint from
$C$,  whence $\mathcal{O}_C(H_1) \cong
\mathcal{O}_C(E)$. The same argument for a horizontal line through $P$  obviously shows that
$\mathcal{O}_C(H_2)
\cong \mathcal{O}_C(E)$. If $h^0(C, \mathcal{O}_C(H_1)) = 2$,  then the two projections $p_1, p_2
: \Gamma \ra \PP^1$ induce the same linear series on $C$, thus $\varphi_{|H_1|}$ and
$\varphi_{|H_2|}$ are related by a projectivity of $\PP^1$, hence $\Gamma$ is the graph of a
projectivity of
$\PP^1$, contradicting the fact that the bidegree of $\Gamma$ is $(4,4)$.
 
Therefore we have a smooth curve of genus three and a divisor of degree 4 such that $h^0(C,
\mathcal{O}_C(H_1)) \geq 3$. Hence $h^0(C, \mathcal{O}_C(K_C - H_1)) \geq 1$,
 which implies that
$K_C \equiv H_1$. Analogously, $K_C \equiv H_2$.
\end{proof}

The next step is to show that for a general curve $C$ of genus $3$,  cases $ii)$ and $iii)$ do not
occur. In fact, we show:

\begin{lem} Let $C$ be a curve of genus $3$  and $\eta \in Pic^0(C)_3 \setminus \{0\}$ such that
$\varphi_{\eta}$ is birational and the image $\varphi_{\eta}(C) = \Gamma$ has a  singular point
$P$ of multiplicity $3$. Then $C$ belongs to an algebraic subset of $\mathcal{M}_3$ of dimension
$\leq 5$.
\end{lem}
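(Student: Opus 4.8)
The plan is to show that the mere existence of a triple point forces $C$, through its canonical model, into the very configuration already analysed in the first lemma of this section. First I translate the triple point into linear equivalences on $C$. By the definition of $\varphi_{\eta}=\varphi_{|K_C+\eta|}\times\varphi_{|K_C-\eta|}$, the divisor cut on $C$ by a fibre of the first projection $\PP^1\times\PP^1\to\PP^1$ is a member of $|K_C+\eta|$, and the one cut by a fibre of the second projection is a member of $|K_C-\eta|$. For the generic such $\Gamma$ the triple point $P$ has three distinct preimages $p_1,p_2,p_3$ on the normalization $C$, transverse to both rulings, and these lie simultaneously in a fibre of each projection. Since both fibres have degree $4$, I may write them as $p_1+p_2+p_3+q$ and $p_1+p_2+p_3+q'$, whence
\[
p_1+p_2+p_3+q\equiv K_C+\eta,\qquad p_1+p_2+p_3+q'\equiv K_C-\eta .
\]
Subtracting gives $q-q'\equiv 2\eta$, and since $3\eta\equiv 0$ we have $-2\eta=\eta-3\eta\equiv\eta$, so $q'-q\equiv\eta$; in particular $q\neq q'$ and $3q\equiv 3q'$.

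Next I read the relation $3q\equiv 3q'$ on the canonical model. I may assume $C$ non-hyperelliptic, the hyperelliptic locus having dimension $5$ in $\mathcal{M}_3$, so that $C$ is a plane quartic $\Sigma$ with $K_C$ cut out by lines. By Riemann--Roch $h^0(K_C-3q)=h^0(3q)-1$, and since $3q'\in|3q|$ with $q'\neq q$ we have $h^0(3q)\geq 2$, hence $h^0(K_C-3q)\geq 1$. Thus $K_C-3q$ is represented by an effective divisor of degree $1$, i.e.\ a single point $Q$ with $3q+Q\equiv K_C$; as $K_C-3q\equiv K_C-3q'$, the same point satisfies $3q'+Q\equiv K_C$. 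Geometrically $3q+Q\equiv K_C$ means that the line cutting out this divisor meets $\Sigma$ at $q$ with multiplicity $3$, so $q$ is an inflection point whose flex tangent passes through $Q\in\Sigma$, and likewise for $q'$. Hence $\Sigma$ possesses two inflection points $q,q'$ whose tangent lines meet at a point $Q$ of $\Sigma$.

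This is precisely the configuration whose moduli were estimated in the first lemma of this section, where it was shown that the non-hyperelliptic quartics admitting two inflectional tangents meeting in a point of $\Sigma$ form an algebraic subset of $\mathcal{M}_3$ of dimension $\leq 5$ (equal to $5$ when $q,q',Q$ are distinct and $4$ otherwise). Therefore $C$ lies in this subset and the lemma follows. I expect the only delicate point to be the bookkeeping for non-generic triple points: if the $p_i$ collide, a ruling is tangent to a branch, or $Q$ coincides with $q$ or $q'$, the divisorial identity above must be rewritten with the correct local multiplicities (replacing $p_1+p_2+p_3$ by the length-$3$ part of the appropriate fibre). Each such degeneration, however, imposes additional conditions and so only makes $C$ more special, leaving the bound $\dim\leq 5$ intact; the entire substantive input, namely the parameter count, is already contained in the first lemma.
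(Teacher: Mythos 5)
Your proof is correct, and after the common first step it takes a genuinely different route from the paper's. Both arguments begin by extracting the same linear equivalences: the paper does this on the blow-up of $P$, where the strict transform of each ruling through $P$ meets $C$ in exactly one point (as $(H_i-E)\cdot C=4-3=1$), so $\mathcal{O}_C(H_i)\cong\mathcal{O}_C(Q_i+E)$, whence $Q_2-Q_1\equiv\eta$, $3Q_1\equiv 3Q_2$, $Q_1\neq Q_2$; this is exactly your relation $q'-q\equiv\eta$, but the blow-up formulation absorbs all the degenerate bookkeeping you flag at the end (branches colliding, a ruling tangent to a branch), so no genericity assumption or ``degenerations only make $C$ more special'' heuristic is needed. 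From there the two proofs diverge. The paper never passes to the canonical model: the pencil $|3Q_1|$ exhibits $C$ as a trigonal curve with total ramification at $Q_1$ and $Q_2$ over distinct branch points, Hurwitz bounds the number of branch points by $8$, and after normalizing three of them to $0,1,\infty$, Riemann's existence theorem gives finitely many families of dimension at most $5$. You instead apply Riemann--Roch to produce $Q$ with $3q+Q\equiv 3q'+Q\equiv K_C$ and land precisely in the configuration of the first lemma of the section -- two inflectional tangents of the canonical quartic meeting at a point of the curve -- reusing its count: a $7$-dimensional linear system of quartics modulo the $2$-dimensional stabilizer of the three points, hence dimension $5$ (and $4$ when the points are not distinct). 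Your route buys economy, since it recycles a count already established, and uniformity in arbitrary characteristic, which is a genuine advantage here: Riemann's existence theorem is a delicate tool in characteristic $p$, where the paper claims to work, whereas your count is elementary projective geometry of plane quartics. The price is the extra reduction you correctly make to the non-hyperelliptic case (the hyperelliptic locus being $5$-dimensional), a step the paper's trigonal argument never needs.
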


\begin{proof} Let $S$ again be the blow up of $\PP^1 \times \PP^1$ in $P$,  and denote by $E$ the
exceptional divisor. Then $\mathcal{O}_C(E)$ has degree $3$ and arguing
 as in prop. (\ref{4uple}), we see that there are points $Q_1, Q_2$ on $C$ such that
$\mathcal{O}_C(H_i) \cong \mathcal{O}_C(Q_i+E)$.  Therefore
$\mathcal{O}_C(Q_2-Q_1) \cong \mathcal{O}_C(H_2-H_1) \cong 
\mathcal{O}_C(K_C - \eta -(K_C + \eta)) \cong
\mathcal{O}_C(\eta)$, whence $3Q_1 \equiv 3Q_2$, $Q_1 \neq Q_2$.
 This implies that there is a morphism $f : C
\rightarrow \PP^1$ of degree $3$, having double ramification in $Q_1$ and $Q_2$.
 By Hurwitz' formula the degree of the ramification divisor $R$  is $10$ and since $ R \geq Q_1 +
Q_2$ $f$ has at most 
$8$ branch points in $\PP^1$. Fixing three of these points to be $\infty, 0, 1$, we obatain (by
Riemann's existence theorem) a finite number of 
 families of dimension at most $5$.
\end{proof}

From now on, we shall make the following 

\noindent {\bf Assumptions.}

\noindent
$C$ is a curve of genus $3$, $\eta \in Pic^0(C)_3 \setminus \{0\}$, and 

\begin{itemize}
\item[1)] $|K_C + \eta|$ and $|K_C - \eta|$ are base point free;
\item[2)] $\varphi_{\eta} : C \rightarrow \Gamma \subset \PP^1 \times \PP^1$ is birational;
\item[3)] $\Gamma \in |4H_1 + 4H_2|$ has only  double points as singularities (possibly infinitely
near).
\end{itemize}

\begin{oss} By the considerations so far, we know that a general curve of genus $3$ fulfills the
assumptions for any $\eta \in Pic^0(C)_3 \setminus \{0\}$.
\end{oss}

We use the notation introduced above: we have $\pi : S \rightarrow \PP^1 \times \PP^1$ and $ C
\subset S$, $ C \in | 4 H_1 + 4 H_2 - 2 \sum_{i=1}^6 E_i |$.

\begin{oss}\label{Ramanujam} Since $S$ is a regular surface, we have an easy case of Ramanujam's
vanishing theorem: if $D$ is an effective divisor which is 1-connected (i.e., for every
decomposition
$ D = A + B$ with $  A, B > 0$, we have $ A \cdot B \geq 1$), then
$ H^1 ( S, \hol_S ( - D)) = 0$.

This follows immediately from Ramanujam's lemma ensuring that 
$H^0 ( D, \hol_D ) = k$, and the long exact cohomology sequence associated to
$$ 0 \ra  \hol_S ( - D) \ra \hol_S \ra \hol_D  \ra 0.$$ In most of our applications we shall show
that $D$ is linearly equivalent to a reduced and  connected divisor (this is a stronger property
than 1-connectedness).

\end{oss}
\qed

We know now that $\mathcal{O}_C(H_1 + H_2) \cong \mathcal{O}_C(2K_C)$, i.e.,
$$
\mathcal{O}_C \cong \mathcal{O}_C(3H_1 + 3H_2 - \sum _{i=1}^6 2E_i).
$$

Since $h^1(S, \mathcal{O}_S(-H_1-H_2)) = 0$, the exact sequence 
\begin{multline} 0 \rightarrow \mathcal{O}_S(-H_1 - H_2) \rightarrow \mathcal{O}_S(3H_1 + 3H_2 -
\sum _{i=1}^6 2E_i) \rightarrow \\
\rightarrow \mathcal{O}_C(3H_1 + 3H_2 - \sum _{i=1}^6 2E_i) \cong \mathcal{O}_C \rightarrow 0,
\end{multline}

is exact on global sections. 

In particular, $h^0(S, \mathcal{O}_S(3H_1 + 3H_2 - \sum _{i=1}^6 2E_i)) = 1$.  We denote by $G$ 
the unique divisor in the linear system $|3H_1 + 3H_2 - \sum _{i=1}^6 2E_i|$.  Note that $C \cap G
= \emptyset$ (since $\mathcal{O}_C
\cong \mathcal{O}_C(G)$).

\begin{oss}\label{E} There is no effective divisor $\tilde{G}$ on $S$ such that $G = \tilde{G} +
E_i$, since otherwise $\tilde{G} \cdot C = -2$,  contradicting that $\tilde{G}$ and $C$ have no
common component.

This means that $G + 2 \sum _{i=1}^6 E_i$ is the total transform of a curve $ G' \subset \PP^1
\times \PP^1$ of bidegree (3,3).
\end{oss}

\begin{lem}
$h^0(G,\hol_G) = 3$, $h^1(G, \hol_G) = 0$.
\end{lem}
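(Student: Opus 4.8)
The plan is to read off both numbers from the short exact sequence
\[ 0 \to \hol_S(-G) \to \hol_S \to \hol_G \to 0 \]
on the rational surface $S$, reducing the statement to two cohomology computations on $S$. Since $S$ is rational we have $h^1(S,\hol_S)=h^2(S,\hol_S)=0$ and $h^0(S,\hol_S)=1$, and since $G>0$ is effective we have $h^0(S,\hol_S(-G))=0$. Hence the long exact cohomology sequence degenerates to $h^0(G,\hol_G)=1+h^1(S,\hol_S(-G))$ together with an isomorphism $H^1(G,\hol_G)\cong H^2(S,\hol_S(-G))$. So it suffices to control $h^1$ and $h^2$ of $\hol_S(-G)$.

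First I would pin down $\chi(\hol_G)$ by adjunction, using $H_1^2=H_2^2=0$, $H_1\cdot H_2=1$, $E_i\cdot E_j=-\delta_{ij}$, $H_k\cdot E_i=0$ (these hold for the total transforms even when some of the $P_i$ are infinitely near) and $K_S=-2H_1-2H_2+\sum_{i=1}^6 E_i$. With $G=3H_1+3H_2-2\sum_{i=1}^6 E_i$ one finds $G^2=-6$ and $G\cdot K_S=0$, so $\chi(\hol_G)=-\tfrac12\,G\cdot(G+K_S)=3$. In view of the isomorphism above it then remains only to prove the vanishing $H^1(G,\hol_G)\cong H^2(S,\hol_S(-G))=0$; the value $h^0(G,\hol_G)=3$ will then follow from $\chi(\hol_G)=h^0-h^1=3$.

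The heart of the matter is this vanishing. By Serre duality $H^2(S,\hol_S(-G))\cong H^0(S,\hol_S(K_S+G))^{*}$, and $K_S+G=H_1+H_2-\sum_{i=1}^6 E_i$; equivalently I must show there is no curve of bidegree $(1,1)$ in $\PP^1\times\PP^1$ passing through all six points $P_i$. I would argue by contradiction through a negative intersection number: if $B\in|H_1+H_2-\sum_{i=1}^6 E_i|$ were effective, then $C\cdot B=(4H_1+4H_2-2\sum E_i)\cdot(H_1+H_2-\sum E_i)=8-12=-4<0$. Since $C$ is irreducible and the residual class $B-C=-3H_1-3H_2+\sum E_i$ is not effective, $C$ is not a component of $B$, forcing $C\cdot B\ge 0$ --- a contradiction. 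Hence $h^0(S,\hol_S(K_S+G))=0$, which gives $H^1(G,\hol_G)=0$ and completes the proof.

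I expect the intersection-number argument of the last paragraph to be the only real obstacle: everything else is bookkeeping with Riemann--Roch and the structure sequence. I prefer this coordinate-free argument to the alternative of checking directly in coordinates that the six nodes impose independent conditions on $|H_1+H_2|$, since the latter would require having already normalized the $P_i$ to the set $\mathcal S$, whereas the negative-intersection argument uses only $C\in|4H_1+4H_2-2\sum_{i=1}^6 E_i|$ and the irreducibility of $C$, and so applies verbatim to the infinitely near configurations as well.
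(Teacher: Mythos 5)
Your proof is correct and is essentially the paper's argument in Serre-dual form: the paper twists the restriction sequence by $K_S$ and applies duality on the curve $G$ (showing $h^0(G,\omega_G)=h^0(S,\hol_S(K_S+G))=0$, then computing $h^1(G,\omega_G)$ by Riemann--Roch), whereas you use the untwisted structure sequence and duality on $S$ (showing $H^1(G,\hol_G)\cong H^2(S,\hol_S(-G))\cong H^0(S,\hol_S(K_S+G))^*=0$, then computing $\chi(\hol_G)$ by Riemann--Roch). Both hinge on the identical key point, namely that $K_S+G\equiv H_1+H_2-\sum_{i=1}^6 E_i$ meets the irreducible curve $C$ negatively and hence cannot be effective, so this is the same approach.
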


\begin{proof} Consider the exact sequence 
$$ 0 \ra \hol_S(K_S) \ra \hol_S(K_S + G) \ra \hol_G(K_G) \ra 0.
$$ Since $h^0(S, \hol_S(K_S)) = h^1(S, \hol_S(K_S)) = 0$, we get 
$$ h^0(S, \hol_S(K_S + G)) = h^0(G, \hol_G(K_G)).
$$ Now, $K_S + G \equiv H_1 + H_2 - \sum_{i=1}^6 E_i$, therefore $(K_S + G)\cdot C = -4$, whence
$h^0(G,
\hol_G(K_G)) \cong h^0(S, \hol_S(K_S + G)) = 0$. 

Moreover, $h^1(G, \hol_G(K_G)) = h^1(S, \hol_S(K_S + G)) + 1$, and by Riemann - Roch  we infer
that, since  $h^1(S,
\hol_S(K_S + G)) = h^0(S,
\hol_S(- G)) = 0$, that $h^1(S,
\hol_S(K_S + G)) = 2$.
\end{proof}

We will show now that $G$ is reduced, hence, by the above lemma,  we shall obtain that $G$ has
exactly $3$ connected components.

\begin{prop}
$G$ is reduced.
\end{prop}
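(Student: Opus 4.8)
The plan is to show $G$ has no multiple components by studying what a non-reduced component would force on the curve $C$. We know $G \in |3H_1 + 3H_2 - \sum_{i=1}^6 2E_i|$ is the unique member of its linear system, that $C \cap G = \emptyset$, and that by Remark \ref{E} the divisor $G$ contains no $E_i$ as a component. Suppose, for contradiction, that $G = 2A + B$ where $A > 0$ collects the components appearing with multiplicity $\geq 2$ (counted so that $A$ is the ``multiple part''). The strategy is to derive numerical constraints on the bidegree of $A$ and $B$ (as pullbacks of bidegrees from $\PP^1 \times \PP^1$, modified by the multiplicities along the $E_i$) from the two conditions $G \cdot C = 0$ and $A \cdot C \geq 0$, $B \cdot C \geq 0$, and then to show these are incompatible with $\Gamma$ being irreducible of bidegree $(4,4)$ with six genuine double points.

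First I would write $A \equiv a_1 H_1 + a_2 H_2 - \sum_i \alpha_i E_i$ and $B \equiv b_1 H_1 + b_2 H_2 - \sum_i \beta_i E_i$ with $2a_j + b_j = 3$ and $2\alpha_i + \beta_i = 2$, so that the bidegrees $(a_1,a_2),(b_1,b_2)$ and the multiplicities $(\alpha_i,\beta_i)$ are severely restricted (each $a_j \in \{0,1\}$ and each $\alpha_i \in \{0,1\}$). The key computation is the intersection with $C \in |4H_1 + 4H_2 - 2\sum_i E_i|$, using $H_1^2 = H_2^2 = 0$, $H_1 \cdot H_2 = 1$, $E_i^2 = -1$, $E_i \cdot E_j = 0$ for $i \neq j$, and $H_j \cdot E_i = 0$. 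Since $G \cdot C = 0$ and $G = 2A + B$, I would show that the only way to distribute nonnegative intersection numbers is to force $A$ and $B$ to be supported on configurations that either meet $C$ negatively (impossible, as $C$ is irreducible and not a component of $A$ or $B$) or else reconstruct a curve of bidegree strictly less than $(3,3)$ whose doubling cannot account for the prescribed multiplicities $2$ at all six points of $\mathcal{S}$.

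The cleanest route is to exploit the fact, already established, that $G$ has $h^0(G,\hol_G) = 3$ and $h^1(G,\hol_G) = 0$, which says $\chi(\hol_G) = 3$ and strongly suggests $G$ is a disjoint union of three rational-type pieces; a non-reduced structure would typically produce $h^1 > 0$ or the wrong Euler characteristic. So I would compute $\chi(\hol_G)$ directly from $G \equiv 3H_1 + 3H_2 - 2\sum_i E_i$ via $\chi(\hol_G) = -\tfrac12 G\cdot(G + K_S)$ on the blow-up $S$ (with $K_S \equiv -2H_1 - 2H_2 + \sum_i E_i$), confirming the value $3$, and then argue that if $G$ had a component of multiplicity $2$, the associated effective decomposition would violate the $1$-connectedness needed to keep $h^1(\hol_G) = 0$, or would contradict uniqueness $h^0(S,\hol_S(G)) = 1$ by producing an extra section. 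In other words, I would leverage Remark \ref{Ramanujam} in reverse: the numerics already pin down $\hol_G$ so tightly that a non-reduced $G$ is excluded.

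The main obstacle I anticipate is the bookkeeping of the finitely many numerical cases for $(a_1,a_2)$, $(b_1,b_2)$ and the $\alpha_i$: one must check each candidate multiple part $A$ and verify that either $A \cdot C < 0$ (contradicting that $C \not\subset A$ and both are effective), or that the residual $B$ forces $\Gamma$ to split off a low-bidegree component, contradicting the irreducibility of $\Gamma$ that follows from $\varphi_\eta$ being birational onto a genuine $(4,4)$ curve. Since each $a_j$ and each $\alpha_i$ is binary-valued, the case analysis is finite and elementary, but it is the part requiring care to ensure no admissible non-reduced configuration slips through; I would organize it by the total bidegree $(a_1 + a_2)$ of the putative multiple part, handling the symmetric cases under the exchange of $H_1 \leftrightarrow H_2$ together.
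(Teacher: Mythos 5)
Your main route (the first, second and fourth paragraphs) is in substance the paper's own proof: the paper likewise uses Remark~\ref{E} to reduce to the image curve $G'$ of bidegree $(3,3)$, writes a non-reduced $G'$ as $2D_1+D_2$, and runs a finite case analysis on the bidegree of the multiple part, using disjointness from $C$ (your $A\cdot C=B\cdot C=0$) to pin down how the points $P_i$ are distributed. So the skeleton is right. But your proposed ``cleanest route'' via $\chi(\hol_G)$ and $h^1(\hol_G)=0$ is a dead end: those values were established in the preceding lemma by exact sequences on $S$, using only the divisor class of $G$ and $G\cdot C=0$, so they hold for the unique member $G$ \emph{whatever} its structure is; indeed $\chi(\hol_G)=-\frac12\,G\cdot(G+K_S)$ depends only on the class. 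No hypothesis of non-reducedness can contradict facts already proved unconditionally, so ``Ramanujam in reverse'' proves nothing. The implication the paper draws from $h^0(\hol_G)=3$ (namely, three connected components) is used \emph{after} reducedness is established, not to establish it.

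The more serious issue is that in your main route the case-closing contradiction is misidentified, and it is exactly the step you defer. After imposing $2a_j+b_j=3$, $2\alpha_i+\beta_i=2$ and $A\cdot C=B\cdot C=0$, two configurations survive the numerical sieve (up to swapping rulings): (i) $A$ of bidegree $(1,1)$ through four of the $P_i$, with $B$ of bidegree $(1,1)$ forced to have double points at the remaining two; (ii) $A$ of bidegree $(1,0)$ through two of the $P_i$, with $B$ of bidegree $(1,3)$ forced to have double points at the remaining four. In both, all intersection numbers with $C$ vanish and the irreducibility of $\Gamma$ is untouched ($\Gamma$ never splits), so neither of the contradictions you anticipate --- $A\cdot C<0$ or $\Gamma$ decomposing --- occurs. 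What actually kills these cases is that a \emph{reduced} curve of bidegree $(1,1)$ has at most one double point, and a reduced curve of bidegree $(1,3)$ has at most three: a double point of $B$ forces the ruling line meeting $B$ in degree one through that point to split off as a component of $B$, and the bidegree has room for too few such lines. This ``too many nodes for the bidegree'' argument is precisely the paper's punchline in its case i), and without it your plan does not terminate. (As an aside, the paper's own case ii) contains a slip --- it asserts $D_1$ contains four of the $P_i$'s, whereas a line through a node of $\Gamma$ meets $\Gamma$ with multiplicity at least two, so $D_1$ contains exactly two and $D_2$ must carry four double points --- but the contradiction survives in the corrected form just stated.)
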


\begin{proof} By remark (\ref{E}) it is sufficient to show that the image of $G$ in $\PP^1 \times
\PP^1$,
 which we denoted by $G'$, is reduced.

Assume that there is an effective divisor $A'$ on $\PP^1 \times \PP^1$ such that $3A' \leq G'$. 
We clearly have $A' \cap
\Gamma \neq \emptyset$ but, after blowing up the six points $P_1, \ldots , P_6$,  the strict
transforms of $A'$ and of $\Gamma$ are disjoint, whence $A'$ and $G'$ must intersect in one of the
$P_i$'s, contradicting remark (\ref{E}).

If $G'$ is not reduced, we may uniquely write
 $G' = 2D_1 + D_2$ with $D_1, D_2$ reduced and having no common component. Up to exchanging the
factors of $\PP^1 \times \PP^1$, we have the following two possibilities:
\begin{itemize}
 \item[i)] $D_1 \in |H_1 + H_2|$;
 \item[ii)] $D_1 \in |H_1|$.
\end{itemize} In the first case also $D_2 \in |H_1 + H_2|$ and its strict transform is disjoint
from $C$. Remark (\ref{E}) implies that $D_2$ meets $\Gamma$ in points which do not belong to
$D_1$, whence $D_2$ has double points where it intersects $\Gamma$. Since $D_2 \cdot \Gamma = 8$
we see that $D_2$ has two points of multiplicity $2$, a contradiction ($D_2$ has bidegree $(1,1)$).

Assume now that $D_1 \in |H_1|$. Then, since $2D_1 \cdot \Gamma = 8$, $D_1$ contains $4$ of the
$P_i$'s and $D_2$ passes through the other two, say $P_1, P_2$. This implies that for the strict
transform of $D_2$ we have:
$\hat{D}_2 \equiv H_1 + 3H_2 - 2E_1 - 2E_2$, whence $\hat{D}_2 \cdot C = 8$, a contradiction.
\end{proof}

We write now $G= G_1 + G_2 + G_3$ as a sum of its connected components,  and accordingly $G' =
G'_1 + G'_2 + G'_3$. 

\begin{lem} The bidegree of $G'_j$, ($j \in \{1,2,3\}$) is $(1,1)$. 

Up to renumbering 
$P_1, \ldots, P_6$ we have $G'_1 \cap G'_2 = \{P_1, P_2\}$, $G'_1 \cap G'_3 = \{P_3, P_4\}$ and
$G'_2 \cap G'_3 = \{P_5, P_6\}$.

More precisely, $ G_1 \in  |H_1 + H_2 - E_1 - E_2 - E_3 - E_4|$,
$ G_2 \in  |H_1 + H_2 - E_1 - E_2 - E_5 - E_6|$, $ G_3 \in  |H_1 + H_2 - E_3 - E_4 - E_5 - E_6|$.
\end{lem}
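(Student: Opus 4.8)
The plan is to handle each connected component of $G$ separately and determine its class on $S$ by essentially numerical arguments. Since $G$ is reduced with three connected components $G_1,G_2,G_3$, we have $h^1(G,\hol_G)=\sum_j p_a(G_j)$, so the vanishing $h^1(G,\hol_G)=0$ forces $p_a(G_j)=0$ for each $j$. By remark (\ref{E}) the divisor $G$ contains no $E_i$, so each $G_j$ is the strict transform of its image $G'_j$ of some bidegree $(a_j,b_j)$, and I may write $G_j\equiv a_jH_1+b_jH_2-\sum_{i=1}^6\mu_{j,i}E_i$ with $a_j,b_j\ge 0$, $(a_j,b_j)\ne(0,0)$ and $\mu_{j,i}\ge 0$. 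From $G\equiv 3H_1+3H_2-2\sum_iE_i$ I read off the two global constraints $\sum_j a_j=\sum_j b_j=3$ and $\sum_j\mu_{j,i}=2$ for each $i$; in particular every $\mu_{j,i}\in\{0,1,2\}$.

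Next I extract two identities per component. Since $K_S\equiv -2H_1-2H_2+\sum_iE_i$, one has the clean relation $C\equiv -2K_S$; hence $G_j\cdot C=0$ (as $G\cap C=\emptyset$) is equivalent to $G_j\cdot K_S=0$, which reads $\sum_i\mu_{j,i}=2(a_j+b_j)$. Substituting $G_j\cdot K_S=0$ into adjunction, $p_a(G_j)=1+\frac{1}{2}(G_j^2+G_j\cdot K_S)=0$ gives $G_j^2=-2$, i.e. $\sum_i\mu_{j,i}^2=2a_jb_j+2$. Cauchy--Schwarz over the six points, $\left(\sum_i\mu_{j,i}\right)^2\le 6\sum_i\mu_{j,i}^2$, then becomes $a_j^2-a_jb_j+b_j^2\le 3$, whose only solutions with $a_j,b_j\ge 0$ not both zero are $(1,1),(1,0),(0,1),(2,1),(1,2)$. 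Imposing $\sum_j(a_j,b_j)=(3,3)$ leaves exactly three candidate multisets of bidegrees: $\{(1,1),(1,1),(1,1)\}$, $\{(2,1),(1,1),(0,1)\}$, and its transpose $\{(1,2),(1,1),(1,0)\}$.

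The crux is eliminating the two mixed cases, which is where the numerics alone do not suffice and the disjointness of distinct connected components must be used. In the case $\{(2,1),(1,1),(0,1)\}$, the identity $\sum_i\mu_{j,i}(\mu_{j,i}-1)=2(a_j-1)(b_j-1)$ together with $\mu_{j,i}\in\{0,1,2\}$ shows the $(2,1)$-component meets all six points with multiplicity $1$ and the $(1,1)$-component passes through four of them; computing in the class group then gives an intersection $3-4=-1<0$ between these two components, which is impossible since distinct connected components are disjoint. The transpose case falls symmetrically after exchanging the two rulings. Hence every $G_j$ has bidegree $(1,1)$ and, by $\sum_i\mu_{j,i}=4$, passes through exactly four of the $P_i$ with multiplicity $1$; denote the four-element index set by $I_j$.

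It remains to read off the combinatorics. Disjointness gives $G_j\cdot G_k=(H_1+H_2)^2-|I_j\cap I_k|=2-|I_j\cap I_k|=0$, so $|I_j\cap I_k|=2$ for each pair. Since $\sum_j\mu_{j,i}=2$ with $\mu_{j,i}\in\{0,1\}$, each $P_i$ lies on exactly two of the three curves, so the two points of $I_j\cap I_k$ are precisely those lying on $G'_j$ and $G'_k$ but not on the third. This partitions the six points into three pairs; after renumbering, $G'_1\cap G'_2=\{P_1,P_2\}$, $G'_1\cap G'_3=\{P_3,P_4\}$, $G'_2\cap G'_3=\{P_5,P_6\}$, which yields exactly the three stated linear equivalence classes for $G_1,G_2,G_3$.
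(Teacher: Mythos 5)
Your argument is correct, but it reaches the conclusion by a genuinely different route than the paper. The paper's proof is more elementary: it first rules out a component of bidegree $(1,0)$ directly, writing such a component as $H_1-E_a-E_b$ (forced by $G_1\cdot C=0$) and computing $G_1\cdot(G-G_1)=1$, impossible for a connected component; since a reduced divisor of bidegree $(m,0)$ with $m\geq 2$ is disconnected, this gives $a_j,b_j\geq 1$, and $\sum_j a_j=\sum_j b_j=3$ then forces all bidegrees to be $(1,1)$ by pigeonhole. It then writes $G_j\equiv H_1+H_2-\sum_i\mu(j,i)E_i$ and works with the same three numerical identities that drive your endgame (row sums $4$ from $G_j\cdot C=0$, column sums $2$ from the class of $G$, pairwise products $2$ from $G_j\cdot G_k=0$), excluding a multiplicity $\mu(j,i)=2$ by an inequality on cross terms rather than by your identity $\sum_i\mu_{j,i}(\mu_{j,i}-1)=2(a_j-1)(b_j-1)$. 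You instead put to work the vanishing $h^1(G,\hol_G)=0$ --- which the paper proves but then exploits only through $h^0(G,\hol_G)=3$ to count components --- deducing $p_a(G_j)=0$ for each component, and you combine this with the structural observation $C\equiv -2K_S$ (never recorded in the paper), so that $G_j\cdot C=0$ becomes $K_S$-orthogonality; adjunction then gives $G_j^2=-2$, Cauchy--Schwarz bounds the bidegrees, and the two mixed bidegree multisets die by a negative intersection number between the $(2,1)$- and $(1,1)$-components. The trade-off: the paper's route needs no adjunction or arithmetic-genus computation and is shorter; yours is more systematic, with every exclusion forced by numerical invariants rather than ad hoc observations, and your identity eliminates multiplicity-$2$ points in one line. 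The combinatorial conclusion (each $P_i$ on exactly two components, $|I_j\cap I_k|=2$, hence the three stated classes) is essentially identical in both proofs.
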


\begin{proof}
 Assume for instance that  $G'_1$ has bidegree $(1,0)$.  Then there is a subset $I \subset \{1,
\ldots, 6\}$ such that $G_1 = H_1 - \sum_{i \in I} E_i$. Since $G_1 \cdot C = 0$, it follows that
$|I| = 2$. But then $G_1 \cdot (G - G_1) = 1$,  contradicting the fact that  $G_1$ is a connected
component of $G$. 

Let $(a_j, b_j)$ be the bidegree of $G_j$: then
$a_j, b_j \geq 1$ since a reduced divisor of bidegree $(m,0)$ is not connected for
$ m \geq 2$. Since $\sum a_j = \sum b_j = 3$, it follows that $a_j = b_j = 1$. 

Writing now $G_j \equiv H_1 + H_2 -
\sum_{i=1}^6 \mu(j,i) E_i$ we obtain 
$$\sum_{j=1}^3 \mu(j,i)= 2, \ \sum_{i=1}^6 \mu(j,i)= 4, \ \sum_{i=1}^6 \mu(k,i)\mu(j,i)= 2$$ 
since $G_j \cdot C = 0$) and  $G_k \cdot G_j = 0$).
 We get the second claim of the lemma provided that we show: $\mu(j,i) = 1, \forall i,j$.

The first formula shows that if  $\mu(j,i) \geq 2$, then $\mu(j,i) = 2$ and $\mu(h,i) = 0$ for $ h
\neq j$. Hence  the second formula shows that 
$$ \sum_{h,k \neq j}\sum_{i=1}^6 \mu(j,i) (\mu(h,i)+ \mu(k,i)) \leq 2,$$ contradicting the third
formulae. 
\end{proof}

In the remaining part of the section we will show that each $G'_i$ consists of  the union of a
vertical and a horizontal line in $\PP^1 \times \PP^1$. 

Since $\hol_C(K_C + \eta) \cong \hol_C(H_1)$ and $\hol_C(K_C - \eta) \cong \hol_C(H_2)$ we get: 
$$
\hol_C(2H_2 - H_1) \cong \hol_C(K_C) \cong \hol_C(2H_1 + 2H_2 - \sum_{i=1}^6 E_i),
$$ whence the exact sequence
\begin{multline} 0 \rightarrow \mathcal{O}_S(-H_1 - 4H_2 + \sum_{i=1}^6 E_i) 
\rightarrow \mathcal{O}_S(3H_1 - \sum _{i=1}^6 E_i) \rightarrow \\
\rightarrow \mathcal{O}_C(3H_1 - \sum _{i=1}^6 E_i) \cong \mathcal{O}_C \rightarrow 0,
\end{multline}

\begin{prop}\label{6lines}
$H^1(S,\hol_S(-(H_1 + 4H_2 - \sum_{i=1}^6 E_i))) = 0$.
\end{prop}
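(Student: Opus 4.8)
The plan is to convert the vanishing, via Serre duality, into a statement about a line bundle pulled back from $\PP^1 \times \PP^1$, where it is immediate. Write $D := H_1 + 4H_2 - \sum_{i=1}^6 E_i$. Since $S$ is a smooth projective surface, Serre duality gives $h^1(S, \hol_S(-D)) = h^1(S, \hol_S(K_S + D))$. As $S$ is the blow up of $\PP^1 \times \PP^1$ at the six points $P_1, \dots, P_6$, we have $K_S \equiv -2H_1 - 2H_2 + \sum_{i=1}^6 E_i$, and therefore $K_S + D \equiv -H_1 + 2H_2$. The point of the computation is that this class contains no exceptional curve: it equals $\pi^*\hol_{\PP^1 \times \PP^1}(-1,2)$.

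First I would then descend from $S$ to $\PP^1 \times \PP^1$. Because $\pi$ is the blow up of smooth points on a smooth surface, $\pi_*\hol_S = \hol_{\PP^1 \times \PP^1}$ and $R^1\pi_*\hol_S = 0$; the projection formula together with the Leray spectral sequence then yield $h^1(S, \pi^*L) = h^1(\PP^1 \times \PP^1, L)$ for every line bundle $L$, in particular for $L = \hol_{\PP^1 \times \PP^1}(-1,2)$. Finally, by the K\"unneth formula $H^1(\PP^1 \times \PP^1, \hol(-1,2))$ is the sum of $H^1(\PP^1,\hol(-1)) \otimes H^0(\PP^1,\hol(2))$ and $H^0(\PP^1,\hol(-1)) \otimes H^1(\PP^1,\hol(2))$; since $\hol_{\PP^1}(-1)$ has no cohomology at all, both summands vanish, so $h^1(S,\hol_S(-D)) = 0$. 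Every ingredient here is characteristic free, so the argument is valid over the arbitrary algebraically closed field $k$.

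I do not expect a genuine obstacle on this route: once $K_S + D$ is recognized as a pull back with a negative first factor, the vanishing is forced, and the only thing to watch is the sign bookkeeping in $K_S + D$. An alternative that stays within the toolkit already introduced would be to exhibit in $|D|$ a reduced, connected effective divisor and to invoke the Ramanujam-type vanishing of Remark \ref{Ramanujam}; a natural candidate is $G_1 + \ell_5 + \ell_6 + \ell$, with $G_1 \in |H_1 + H_2 - E_1 - E_2 - E_3 - E_4|$ the component of $G$ from the preceding lemma, $\ell_5 \in |H_2 - E_5|$ and $\ell_6 \in |H_2 - E_6|$ the strict transforms of the horizontal lines through $P_5, P_6$, and $\ell \in |H_2|$ a further horizontal line. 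One computes $G_1 \cdot \ell_5 = G_1 \cdot \ell_6 = G_1 \cdot \ell = 1$, so the divisor is connected, and it is reduced provided the three horizontal lines are distinct. Here the real difficulty does appear, namely controlling reducedness when two of the $P_i$ happen to share a horizontal line; the Serre duality argument avoids this case analysis entirely, which is why I would adopt it.
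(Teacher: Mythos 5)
Your proof is correct, but it takes a genuinely different route from the paper's. The paper proves Proposition \ref{6lines} by exhibiting a reduced and connected divisor in $|H_1 + 4H_2 - \sum_{i=1}^6 E_i|$, namely $G_1$ plus a member of $|3H_2 - E_5 - E_6|$, with exactly the case distinction you anticipated: if $P_5, P_6$ lie on a common horizontal line one takes $|H_2 - E_5 - E_6| + |2H_2|$, otherwise $|H_2 - E_5| + |H_2 - E_6| + |H_2|$; connectedness follows from $G_1 \cdot H_2 = G_1 \cdot (H_2 - E_5) = G_1 \cdot (H_2 - E_6) = G_1 \cdot (H_2 - E_5 - E_6) = 1$, and then Remark \ref{Ramanujam} gives the vanishing. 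Your route -- Serre duality, the identification $K_S + D \equiv -H_1 + 2H_2 = \pi^*\hol_{\PP^1 \times \PP^1}(-1,2)$, descent via $\pi_*\hol_S = \hol_{\PP^1 \times \PP^1}$, $R^1\pi_*\hol_S = 0$, and K\"unneth -- is sound at every step and indeed characteristic free; note moreover that it uses only the numerical class of $D$, so it is insensitive to the actual position of the $P_i$ (even to their being infinitely near), whereas the paper's argument must produce an explicit reduced connected curve and therefore needs the case analysis. Your instinct to avoid the divisor-theoretic route is also vindicated by a subtlety you did not mention: when two of the $P_i$ share a horizontal line, $D$ fails to be nef (it meets the strict transform $H_2 - E_5 - E_6$ negatively), so the "Ramanujam vanishing theorem" invoked loosely at the start of the paper's proof does not apply off the shelf -- which is precisely why the authors give the elementary argument via Remark \ref{Ramanujam}. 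What the paper's approach buys in exchange is that it stays inside the elementary toolkit set up in Remark \ref{Ramanujam}, and the explicit decompositions of the linear system are recycled immediately afterwards (in the remark following the proposition) to show that $|H_1 + 4H_2 - \sum_{i=1}^6 E_i|$ has no fixed part. Finally, your fallback sketch is essentially the paper's proof, and the reducedness failure you flagged (if $P_5, P_6$ share a horizontal line, then $\ell_5 + \ell_6$ contains that line's strict transform twice) is exactly the case the paper's first alternative is designed to handle.
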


\begin{proof}
 The result follows immediately by  Ramanujam's vanishing theorem, but we can also give an
elementary proof using   remark \ref{Ramanujam}. 

It suffices to show that the linear system $|H_1 + 4H_2 - \sum_{i=1}^6 E_i|$ contains a reduced
and connected divisor.

Note that 
$G_1 + |3H_2 - E_5 - E_6| \subset |H_1 + 4H_2 - \sum_{i=1}^6 E_i|$,  and that
$|3H_2 - E_5 - E_6|$ contains $|H_2 - E_5 - E_6| + |2H_2|$, if there is a line $H_2$ containing
$P_1, P_2$, else it contains $|H_2 - E_5| + |H_2 - E_6| + |H_2|$. Since $ G_1 \cdot H_2 =  G_1
\cdot (H_2 - E_5)= G_1 \cdot  (H_2  - E_6 )= G_1 \cdot  (H_2  -  E_5 - E_6 )= 1$, we have obtained
in both cases a reduced and connected divisor.

\end{proof}

\begin{oss} One can indeed show, using
$G_2 + |3H_2 - E_3 - E_4| \subset |H_1 + 4H_2 - \sum_{i=1}^6 E_i|$ and $G_3 + |3H_2 - E_1 - E_2|
\subset |H_1 + 4H_2 - \sum_{i=1}^6 E_i|$ 
 that  $|H_1 + 4H_2 -
\sum_{i=1}^6 E_i|$ has no fixed part, and then by Bertini's theorem, since 
$(H_1 + 4H_2 - \sum_{i=1}^6 E_i)^2 = 8 - 6 = 2 > 0$,  a general curve in $|H_1 + 4H_2 -
\sum_{i=1}^6 E_i|$ is irreducible. 
\end{oss}

In view of proposition \ref{6lines}  
 the above exact sequence (and the one where the roles of $H_1, H_2$ are exchanged) yields the
following:

\begin{cor} For $j \in \{1,2\}$ there is exactly one divisor $N_j \in |3H_j - \sum_{i=1}^6 E_i|$.
\end{cor}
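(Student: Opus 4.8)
The plan is to read the corollary directly off the long exact cohomology sequence attached to the short exact sequence displayed just before Proposition \ref{6lines}. Introduce the shorthand $\LLL := \hol_S(-H_1 - 4H_2 + \sum_{i=1}^6 E_i)$ for its kernel, so that the sequence reads
$$0 \to \LLL \to \hol_S(3H_1 - \sum_{i=1}^6 E_i) \to \hol_C \to 0.$$
First I would observe that $\LLL$ has no global sections: since $H_2$ is base point free, hence nef, and $(-H_1 - 4H_2 + \sum_{i=1}^6 E_i)\cdot H_2 = -1 < 0$, the class $-H_1 - 4H_2 + \sum_{i=1}^6 E_i$ cannot be effective, so $h^0(S, \LLL) = 0$.

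Next, Proposition \ref{6lines} supplies exactly the vanishing $H^1(S, \LLL) = 0$ needed to kill the connecting term. Passing to cohomology gives the exact sequence
$$0 \to H^0(S, \LLL) \to H^0(S, \hol_S(3H_1 - \sum_{i=1}^6 E_i)) \to H^0(C, \hol_C) \to H^1(S, \LLL),$$
whose outer two terms both vanish, so the restriction map is an isomorphism onto $H^0(C, \hol_C)$. Because $C$ is irreducible we have $H^0(C, \hol_C) = k$, whence $h^0(S, \hol_S(3H_1 - \sum_{i=1}^6 E_i)) = 1$ and there is a unique effective divisor $N_1 \in |3H_1 - \sum_{i=1}^6 E_i|$.

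Finally I would invoke the symmetry exchanging the two factors of $\PP^1 \times \PP^1$ — equivalently, rerun the identical argument on the companion exact sequence in which the roles of $H_1$ and $H_2$ are interchanged — to produce the unique $N_2 \in |3H_2 - \sum_{i=1}^6 E_i|$. No step here is a genuine obstacle: the only substantive input is the surjectivity of the restriction map, which is precisely the $H^1$-vanishing already proved in Proposition \ref{6lines}, while the remaining checks (non-effectivity of $\LLL$ and $h^0(C,\hol_C)=1$) are immediate.
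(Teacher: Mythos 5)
Your proof is correct and follows essentially the same route as the paper: the corollary is read off the long exact cohomology sequence of the displayed short exact sequence, with Proposition \ref{6lines} supplying the $H^1$-vanishing for surjectivity, the (easy) non-effectivity of $-H_1-4H_2+\sum_{i=1}^6 E_i$ giving injectivity, and the symmetric sequence with $H_1,H_2$ exchanged handling $N_2$. Your nef-intersection argument $\left(-H_1-4H_2+\sum_{i=1}^6 E_i\right)\cdot H_2=-1<0$ just makes explicit a step the paper leaves tacit.
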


By the uniqueness of $G$, we see that $G = N_1 + N_2$. Denote by $N'_j$  the curve in 
$\PP^1 \times \PP^1$ whose total transform is $N_j +\sum_{i=1}^6 E_i$. 

We have just seen that $G$ is the strict transform of three vertical and three horizontal lines in
$\PP^1
\times \PP^1$. Hence each connected component $G_j$ splits into the strict transform of a vertical
and a horizontal line. Since $G$ is reduced, the lines are distinct (and there are no infinitely
near points).

We can choose coordinates in $\PP^1 \times\PP^1$ such that $G'_1 = (\{\infty\} \times \PP^1) \cup
(\PP^1 \times \{\infty\})$, 
$G'_2 = (\{0\} \times \PP^1) \cup (\PP^1 \times \{0\})$ and $G'_3 = (\{1\} \times \PP^1) \cup
(\PP^1 \times \{1\})$.

\begin{oss}

The points $P_1, \dots , P_6$ are then the points of the set $\mathcal S$ previously defined.

 Conversely, consider in $\PP^1 \times \PP^1$ the set  $ \mathcal{S} : = \{P_1, \ldots,
 P_6 \} = (\{\infty, 0,1\} \times \{\infty, 0,1\}) \setminus \{(\infty,
 \infty), (0,0), (1,1) \} $. Let $\pi : S \ra \PP^1 \times \PP^1$
 be the blow up of the points $P_1, \ldots, P_6$ and suppose (denoting the
 exceptional divisor over $P_i$ by $E_i$) that $C \in |4H_1 + 4H_2 - \sum 2 E_i|$ is a smooth 
curve.
 Then $C$ has genus $3$, $\hol_C(3H_1) \cong \hol_C(\sum E_i) \cong \hol_C(3H_2))$.  Setting
$\hol_C(\eta):=\hol_C(H_2 - H_1)$, we obtain therefore $3 \eta \equiv 0$.
\end{oss}

It remains to show that $\hol_C(\eta)$ is not isomorphic to $\hol_C$.

\begin{lem}
 $\eta$ is not trivial.
\end{lem}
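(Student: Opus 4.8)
The plan is to argue by contradiction: suppose $\hol_C(\eta)\cong\hol_C$, i.e.\ $\hol_C(H_1)\cong\hol_C(H_2)$, and derive a contradiction with the fact that $\Gamma$ is a curve of bidegree $(4,4)$ whose only singularities are the six ordinary double points at $\mathcal S$. Write $L:=\hol_C(H_1)\cong\hol_C(H_2)$, a line bundle of degree $H_1\cdot C=4$ on the genus-$3$ curve $C$. By construction the morphism $\varphi_{\eta}=\varphi_{|H_1|}\times\varphi_{|H_2|}$ is given by the two $2$-dimensional subspaces $V_1,V_2\subseteq H^0(C,L)$ obtained by pulling back $H^0(\PP^1,\hol(1))$ along the two rulings, and the whole argument turns on comparing $V_1$ and $V_2$.

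First I would treat the case $V_1=V_2$. Then $\varphi_{|H_1|}$ and $\varphi_{|H_2|}$ differ only by an automorphism of $\PP^1$, so $\Gamma$ is contained in the graph of a projectivity of $\PP^1$, a curve of bidegree $(1,1)$; this contradicts that $\Gamma$ has bidegree $(4,4)$. This is exactly the reduction already used in the proof of Proposition \ref{4uple}, which I would simply invoke.

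The substantive case is $V_1\neq V_2$. Then $\dim(V_1+V_2)\ge 3$, so $h^0(C,L)\ge 3$; since $\deg L=4=\deg K_C$ and $h^1(C,L)=h^0(C,\hol_C(K_C-L))$ with $\deg(K_C-L)=0$, Riemann--Roch forces $\hol_C(K_C-L)$ to be effective of degree $0$, hence trivial, i.e.\ $L\cong\hol_C(K_C)$. Thus, exactly as in Proposition \ref{4uple}, $\hol_C(H_1)\cong\hol_C(H_2)\cong\hol_C(K_C)$, and $C$ is canonically embedded as a smooth plane quartic $\Sigma\subset\PP^2$, the two pencils $V_1,V_2$ being the pencils of lines through two points $c_1,c_2\in\PP^2$ (necessarily $c_1\neq c_2$, since $V_1\neq V_2$, and $c_1,c_2\notin\Sigma$ because $\varphi_{\eta}$ is a morphism). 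Hence $\varphi_{\eta}$ is the product of the two projections $\pi_{c_1},\pi_{c_2}\colon\Sigma\to\PP^1$, and the four points of $\Sigma\cap\overline{c_1c_2}$ are all sent by both projections to the single point corresponding to the line $\overline{c_1c_2}$. They therefore collapse to one point of $\Gamma$, which is consequently a point of multiplicity $4$ (precisely case i) of the table, the configuration of the Remark following Proposition \ref{4uple}). This contradicts the assumption that the only singularities of $\Gamma$ are six ordinary double points, and the contradiction completes the proof.

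I expect the main obstacle to be this second case: one must identify the two distinct $1$-dimensional subsystems of $|K_C|$ with projections of the canonical quartic from two genuinely distinct points off $\Sigma$, and check that the four points cut on $\Sigma$ by the line joining them really do fuse into a single singular point of multiplicity four rather than producing admissible nodes. Here the base-point-freeness in our standing Assumptions and the smoothness of the canonical image $\Sigma$ are what guarantee that $c_1\neq c_2$ lie off $\Sigma$ and that the collapse is as claimed; everything else is the Riemann--Roch bookkeeping already rehearsed in Proposition \ref{4uple}.
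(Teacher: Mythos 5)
Your argument tracks the paper's own proof for its first half: the case $V_1=V_2$ is disposed of by the same graph-of-a-projectivity argument as in Proposition \ref{4uple}, and the Riemann--Roch step forcing $L\cong\hol_C(K_C)$ is identical. Your final contradiction is located differently, and correctly: you exhibit the image of $\Sigma\cap\overline{c_1c_2}$ as a point of multiplicity $4$ on $\Gamma$ (the sum of the branch multiplicities there equals $\deg(\Sigma\cdot\overline{c_1c_2})=4$, whatever the tangency pattern), which is incompatible with $\Gamma$ having only ordinary double points. This is exactly the inverse of the paper's move: the paper pushes $\Gamma$ to $\PP^2$ by the elementary transformation $\psi$ centered at that very point and finds the contradiction there, namely that the canonical map would send $C$ birationally onto a \emph{singular} quartic.

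There is, however, a genuine gap: you assert that $C$ ``is canonically embedded as a smooth plane quartic $\Sigma$,'' i.e.\ that $C$ is non-hyperelliptic. This lemma lives in the \emph{converse} direction of Theorem \ref{thm1}: $C$ is an arbitrary smooth member of $|4H_1+4H_2-2\sum E_i|$, the normalization of $\Gamma$, and no generality or non-hyperellipticity hypothesis is in force. If $C$ were hyperelliptic, the canonical map would be $2:1$ onto a conic, there would be no quartic $\Sigma$, and your count $\deg(\Sigma\cap\overline{c_1c_2})=4$ would have no meaning; so your argument, as written, does not rule this case out. The repair is short: under the contradiction hypothesis $\eta\equiv 0$ both pencils $V_1,V_2$ lie inside $|K_C|$, and on a hyperelliptic curve every subsystem of the canonical system is composed with the hyperelliptic involution, so $\varphi_\eta$ would have degree at least $2$ --- contradicting the fact that $\varphi_\eta\colon C\to\Gamma$ is the normalization map of a curve with ordinary double points, hence birational. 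Note that the paper's phrasing sidesteps this entirely: ``birational onto a singular quartic'' is absurd for hyperelliptic and non-hyperelliptic curves alike, which is precisely the economy you give up by insisting on the canonical embedding.
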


\begin{proof}
 Assume $\eta \equiv 0$. Then $\hol_C(H_1) \cong \hol_C(H_2)$ and, since 
$\Gamma$ has bidegree $(4,4)$, we argue as in the proof of proposition \ref{4uple}) that
$h^0(\hol_C(H_i)) \geq 3$, whence $\hol_C(H_i) \cong \hol_C(K_C)$. 

The same argument shows that the two peojections of $\Gamma$ to $\PP^1$ yield two different
pencils in the canonical system. It follows that the canonical map of $C$ factors as the
composition of 
$C \ra \Gamma \subset \PP^1 \times \PP^1 $ with the rational map $ \psi : \PP^1 \times \PP^1
\dashrightarrow \PP^2$ which blows up one point and  contracts the vertical and horizontal line
through it. Since $\Gamma$ has six singular points, the canonical map sends $C$ birationally onto
a singular quartic curve in $\PP^2$, absurd.
\end{proof}

\section{Rationality of the moduli spaces}

In this section we will use the geometric despcription of pairs $(C, \eta)$, where $C$ is a genus
$3$ curve and
$\eta$ a non trivial $3$ - torsion divisor class, and study the birational structure of their
moduli space.

More precisely, we shall prove the following

\begin{teo}\label{rat} 1) The moduli space $\mathcal{M}_{3,\eta} := \{(C, \eta): C$ a general
curve of genus $3, ~ \eta \in Pic^0(C)_3 \setminus \{0\}\}$ is rational.

2) The moduli space $\mathcal{M}_{3,\langle \eta \rangle } := \{(C, \langle \eta \rangle):
 C$ a general curve of genus $3, ~ \langle \eta \rangle \cong
\mathbb{Z}/3 \mathbb{Z} \subset Pic^0(C)\}$ is rational.

\end{teo}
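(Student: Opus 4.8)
The plan is to prove, using the birational models recorded in the introduction, that the quotients $\PP(V)/\mathfrak{S}_3$ and $\PP(V)/(\mathfrak{S}_3\times\ZZ/2)$ are rational, where $V := V(4,4,-\mathcal{S})$. First I would pin down $\dim V$: the space $H^0(\hol_{\PP^1\times\PP^1}(4,4))$ has dimension $25$, and the six double points of $\mathcal{S}$ impose $3$ independent conditions each (independence being exactly the statement that the normalisation has the expected genus $3$), so $\dim V = 25-18 = 7$ and both moduli spaces are $6$-dimensional. Everything thus reduces to a linear action of a finite group of order $6$, respectively $12$, on a $7$-dimensional space, and it suffices to exhibit the field of invariants as purely transcendental. (The mere existence of rationality would also follow from Bogomolov--Katsylo, but the point is to compute the invariants.)

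Assume first $\mathrm{char}\,k\neq 2,3$, so that the group algebra is semisimple. The engine is the combination of (a) an explicit isotypic decomposition of $V$, (b) dehomogenisation against a trivial summand, and (c) the no-name (Speiser) lemma together with the polynomiality of reflection-group invariants. To obtain the decomposition I would use the $\mathfrak{S}_3$-equivariant jet sequence
$$0 \ra V \ra \mathrm{Sym}^4 \otimes \mathrm{Sym}^4 \ra \bigoplus_{i=1}^6 J^2_{P_i} \ra 0,$$
where $\mathrm{Sym}^4$ is the space of binary quartics (a $5$-dimensional $\mathfrak{S}_3$-module via $\mathfrak{S}_3 \subset \PP GL(2,k)$) and $J^2_{P_i}=\hol_{P_i}/\mathfrak{m}_{P_i}^2$ is the $3$-dimensional $2$-jet. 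A short character computation gives $\mathrm{Sym}^4 \cong \mathbf 1 \oplus 2W$, where $\mathbf 1,\varepsilon,W$ denote the trivial, sign and standard irreducibles; moreover the six points of $\mathcal{S}$ form a single free $\mathfrak{S}_3$-orbit (the action on ordered pairs of distinct elements of $\{0,1,\infty\}$ is simply transitive), so $\bigoplus_i J^2_{P_i}$ is a multiple of the regular representation and subtraction of characters yields
$$V \cong 2\,\mathbf 1 \oplus \varepsilon \oplus 2W \qquad \text{as } \mathfrak{S}_3\text{-modules}.$$
Using one trivial summand to pass to the affine chart, $\PP(V)/\mathfrak{S}_3$ is birational to $V'/\mathfrak{S}_3$ with $V' = \mathbf 1 \oplus \varepsilon \oplus 2W$. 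Since $\mathfrak{S}_3$ acts faithfully through the reflection representation $W$, whose invariant ring is polynomial ($W/\mathfrak{S}_3 \cong \mathbb A^2$), the no-name lemma trivialises the remaining summands and gives $V'/\mathfrak{S}_3 \sim (W/\mathfrak{S}_3)\times \mathbb A^4 \cong \mathbb A^6$, proving part 1.

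For part 2 I would refine the computation to $G=\mathfrak{S}_3\times\ZZ/2$, the extra involution $\sigma$ being the exchange of the two rulings; the same jet sequence (now with $\sigma$ swapping the tensor factors and interchanging the two points of each $\sigma$-fixed pair) produces
$$V \cong 2(\mathbf 1\boxtimes +)\oplus(\varepsilon\boxtimes -)\oplus 2(W\boxtimes +),$$
where $\boxtimes$ pairs an $\mathfrak{S}_3$-irreducible with the trivial $(+)$ or sign $(-)$ character of $\ZZ/2$. Crucially $\sigma$ then acts on $V' := (\mathbf 1\boxtimes +)\oplus(\varepsilon\boxtimes -)\oplus 2(W\boxtimes +)$ (the affine chart after dehomogenising against a trivial summand) as a single sign-change on the one-dimensional factor $\varepsilon\boxtimes -$ and trivially elsewhere; hence $V'/\langle\sigma\rangle$ is again an affine space $\mathbb A^6$ on which $\mathfrak{S}_3$ acts faithfully through the copies of $W$ and trivially on the two remaining coordinates. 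Applying the no-name lemma once more gives $V'/G \sim (W/\mathfrak{S}_3)\times\mathbb A^4 \cong \mathbb A^6$, proving part 2.

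The main obstacle is the excluded characteristics $2$ and $3$, where $2$ or $3$ divides $|\mathfrak{S}_3|$, the group algebra is not semisimple, and $V$ does not split as a direct sum of irreducibles; in particular the Chevalley--Shephard--Todd theorem no longer applies, so $W/\mathfrak{S}_3 \cong \mathbb A^2$ cannot be invoked as a black box, and in characteristic $2$ the involution $\sigma$ acts unipotently rather than as a sign-change. There I would abandon the decomposition and argue directly: choose coordinates adapted to an $\mathfrak{S}_3$-stable (resp. $G$-stable) flag of $V$ whose graded pieces mirror the summands above, write down explicitly a generating set of invariant rational functions (dehomogenising with an invariant linear form, forming the invariants of the two-dimensional piece, and adjoining the remaining coordinates rendered invariant), and check by hand that these are algebraically independent and generate $k(\PP(V))^{G}$. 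Verifying pure transcendentality of this explicitly written invariant field, without the crutch of complete reducibility, is the technical heart of the two modular cases.
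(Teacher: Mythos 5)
Your argument in characteristic $\neq 2,3$ is correct, but it follows a genuinely different route from the paper's. You obtain the splitting $V\cong 2\,\mathbb{I}\oplus\mathfrak{A}\oplus 2W$ by an equivariant character computation through the jet sequence, and then conclude abstractly: dehomogenize along a trivial summand, quotient in stages by the ruling-exchange involution, and apply the no-name lemma over the generically free base $W$ together with $W/\mathfrak{S}_3\cong\mathbb{A}^2$. This is precisely the Bogomolov--Katsylo-style argument (cf. \cite{bogkat}) that the introduction of the paper says one \emph{could} use but deliberately avoids: the paper instead produces the same decomposition by exhibiting the explicit invariant complement spanned by $f_{11},f_{00},f_{\infty\infty}$ and factoring $\ker(ev)$ as an invariant sextic times $V(1)\otimes V(1)$, chooses eigencoordinates $z_i,w_i,u$, and writes down explicit generators of the invariant field ($A_1$, $T$, $S+T^3/S$, $u(S-T^3/S)$, $A_2(S-T^3/S)$, with the fourth generator squared for part 2), proving equality by a degree count. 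Your approach buys brevity; the paper's buys an explicit function-field model and, crucially, a template that survives the failure of complete reducibility. One local weakness: your claim that the swap $\sigma$ acts trivially on $2\mathbb{I}\oplus 2W$ and by $-1$ on $\mathfrak{A}$ is asserted rather than proved. It is true --- the paper verifies it directly, since the sextic prefactor and the $f_{ii}$ are symmetric in $x,y$ and the swap on $V(1)\otimes V(1)$ is $+1$ on $\mathrm{Sym}^2$ and $-1$ on $\wedge^2$ --- but the character argument you gesture at would require computing traces of the mixed elements $(g,\sigma)$, including fixed-point and line-bundle contributions, which you do not do.

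The genuine gap is characteristics $2$ and $3$. The theorem is stated over an algebraically closed field of arbitrary characteristic, and for these cases your proposal is a plan, not a proof: no invariants are written down, no degrees of field extensions are computed, nothing is verified, and you yourself call this "the technical heart." Moreover the plan points in a slightly wrong direction: you propose to work with a $G$-stable \emph{flag} whose graded pieces mirror the irreducible summands, but invariant rational functions do not pass through a filtration in any straightforward way, and that is exactly where the difficulty lives. What actually makes the modular cases tractable (and what the paper exploits) is that honest direct-sum decompositions still exist there, just not into irreducibles: in characteristic $3$ one has $\mathbb{V}\cong 2\mathbb{W}\oplus\mathfrak{A}$ with $\mathbb{W}$ the indecomposable permutation module, and in characteristic $2$ one has $\mathbb{V}\cong\mathbb{W}\oplus V(1,1)$, where the swap acts unipotently on $V(1,1)$. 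The paper then writes explicit invariants ($\sigma_1,\ldots,\sigma_7$ in characteristic $3$; $\sigma_1,\ldots,\sigma_6$, $v$, $t$ in characteristic $2$) and proves generation by index counts resting on lemmas (2.2) and (2.8) of \cite{fabrat}, which handle invariants of the permutation module without any semisimplicity. Until you carry out an argument of this kind, your proof establishes both parts of the theorem only in characteristic $\neq 2,3$.
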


\begin{oss} By the result of the previous section, and since any automorphism of $\PP^1 \times
\PP^1 $ which sends the set $\mathcal S$ to itself belongs to the group
$\mathfrak{S}_3 \times \ZZ / 2 \ZZ$, follows immediately   that, if we set  
$$ V(4,4, - \mathcal{S}):= H^0(\hol_{\PP^1 \times \PP^1}(4,4)(- 2\sum_{i\neq j, i,j \in \{\infty,
0,1\}} P_{ij})),
$$

$\mathcal{M}_{3,\eta}$ is birational to
$\PP(V(4,4, - \mathcal{S})) /\mathfrak{S}_3$, 
 while  $\mathcal{M}_{3,\langle \eta \rangle }$ is birational to 
$\PP(V(4,4, - \mathcal{S}))/(\mathfrak{S}_3 \times \mathbb{Z}/2 \ZZ)$, where the generator
$\sigma$ of $\mathbb{Z}/2 \mathbb{Z}$ acts by coordinate exchange  on  $\PP^1 \times \PP^1 $,
whence on
$V(4,4, - \mathcal{S})$.
\end{oss}

In order to prove the above theorem we will explicitly calculate the  respective subfields of 
invariants of the function field  of $\PP(V(4,4, - \mathcal{S}))$ and show that they are generated
by purely transcendental elements.

Consider the following polynomials of $\mathbb{V}:=V(4,4, - \mathcal{S})$,  which are invariant
under the action of
$\mathbb{Z}/2 \ZZ$:
$$ f_{11} (x,y):= x^2_0x^2_1y^2_0y^2_1,
$$ 
$$ f_{\infty \infty} (x,y):= x^2_1(x_1 - x_0)^2y^2_0(y_1 - y_0)^2,
$$ 
$$ f_{00} (x,y):= x^2_0(x_1-x_0)^2y^2_0(y_1-y_0)^2.
$$   Let $ev: \mathbb{V} \ra \bigoplus_{i=0,1, \infty} k_{(i, i)} =: \mathbb{W}$ be the evaluation
map at the three standard diagonal points, i.e.,
$ev(f):=(f(0,0), f(1,1), f(\infty, \infty))$.

Since $f_{ii} (j,j) = \delta_{i,j}$, 
 we can decompose $\mathbb{V} \cong \mathbb{U} \oplus
\mathbb{W}$, where
$\mathbb{U}:= ker (ev)$ and $\mathbb{W}$ is the  subspace generated by the three above
polynomials, which is easily shown to be an invariant subspace using the following formulae $(*)$:

\begin{itemize}
\item $(1,3)$ exchanges $x_0$ with $x_1$, multiplies $x_1 - x_0$ by $-1$, \\
\item $(1,2)$ exchanges $x_1 - x_0$ with $x_1$, multiplies $x_0$ by $-1$,\\
\item $(2,3)$ exchanges $x_0 - x_1$ with $x_0$, multiplies $x_1$ by $-1$.  
\end{itemize}

 In fact, `the permutation' representation $\mathbb{W}$ of the symmetric group  splits (in
characteristic $\neq 3$) as the direct sum of the trivial representation (generated by $e_1 + e_2
+ e_3$) and the standard representation, generated by $ x_0 : = e_1 - e_2, x_1 : = -e_2 + e_3$, 
which is isomorphic to the representation on $ V(1) : = H^0 (\hol_{\PP^1}(1))$.

Note that $\mathbb{U} = x_0x_1(x_1-x_0)y_0y_1(y_0-y_1)H^0(\PP^1 \times
\PP^1,\hol_{\PP^1 \times \PP^1}(1,1))$. 

\noindent We write $V(1,1):=H^0(\PP^1 \times
\PP^1,\hol_{\PP^1 \times \PP^1}(1,1)) = V(1) \otimes V(1)$, where 
$V(1):= H^0(\PP^1,\hol_{\PP^1}(1))$,  is  as above the standard representation of
$\mathfrak{S}_3$. 

 Now $V(1) \otimes V(1)$ splits, in characteristic $ \neq 2,3$, as a sum  of irreducible
representations  $\mathbb{I} \oplus  \mathfrak{A} \oplus W$, where the three factors are the {\em
trivial}, the {\em alternating} and the {\em standard} representation of $\mathfrak{S}_3$.

Explicitly,    $V(1) \otimes V(1) \cong \wedge^2 (V(1)) \oplus Sym^2(V(1))$, and $Sym^2(V(1))$ is
isomorphic to $\mathbb{W}$, since it has the following basis
$ x_0 y_0, x_1 y_1, (x_1 - x_0) (y_1 - y_0)$. We observe for further use that $\ZZ / 2 \ZZ$ acts
as the identity on $Sym^2(V(1))$, while it acts on $\wedge^2 (V(1))$, spanned by $x_1 y_0 - x_0
y_1$ via multiplication by $-1$.

We have thus seen
\begin{lem} If $char(k) \neq 2,3$ then the $\mathfrak{S}_3$ -module $\mathbb{V}$ splits as a sum
of irreducible modules as follows:
$$ \mathbb{V} \cong  2(\mathbb{I} \oplus W) \oplus \mathfrak{A}.
$$
\end{lem}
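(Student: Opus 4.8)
The plan is to assemble the two direct-sum decompositions already prepared in the text: the splitting $\mathbb{V}\cong\mathbb{U}\oplus\mathbb{W}$ arising from the evaluation map $ev$ at the three diagonal points, together with the internal decompositions of each summand. Since the lemma merely records the outcome of these computations, the real content is to check that every map involved is $\mathfrak{S}_3$-equivariant and that the two characteristic hypotheses are precisely what is needed for complete reducibility.

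First I would verify that $ev:\mathbb{V}\to\mathbb{W}$ is $\mathfrak{S}_3$-equivariant, so that both $\mathbb{U}=\ker(ev)$ and its complement are submodules and the splitting $\mathbb{V}\cong\mathbb{U}\oplus\mathbb{W}$ respects the action. This is immediate because the diagonal points $(0,0),(1,1),(\infty,\infty)$ are permuted by the diagonal $\mathfrak{S}_3$-action; moreover $f_{ii}(j,j)=\delta_{ij}$ shows that $\langle f_{00},f_{11},f_{\infty\infty}\rangle$ maps isomorphically onto the target, which is what lets us identify the complement with $\mathbb{W}$.

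Next I would identify each summand as a module. For $\mathbb{W}$ the formulae $(*)$ show it is the permutation representation on the three diagonal points, which in characteristic $\neq 3$ splits as $\mathbb{I}\oplus W$. For $\mathbb{U}$ the key point is the factorization $\mathbb{U}=D\cdot V(1,1)$ with $D=x_0x_1(x_1-x_0)\,y_0y_1(y_0-y_1)$: multiplication by $D$ is a linear isomorphism from $V(1,1)=V(1)\otimes V(1)$ onto $\mathbb{U}$, and I would check via $(*)$ that $D$ transforms under $\mathfrak{S}_3$ by the \emph{trivial} character. Indeed each transposition multiplies the $x$-Vandermonde factor $x_0x_1(x_1-x_0)$ by $-1$ and, by the diagonal action, the $y$-factor $y_0y_1(y_0-y_1)$ by $-1$ as well, so the two signs cancel. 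Hence multiplication by $D$ is genuinely $\mathfrak{S}_3$-equivariant and $\mathbb{U}\cong V(1)\otimes V(1)$ as modules. In characteristic $\neq 2,3$ the tensor square splits as $V(1)\otimes V(1)\cong\wedge^2 V(1)\oplus Sym^2 V(1)$, where $\wedge^2 V(1)$ carries the sign character $\mathfrak{A}$ while $Sym^2 V(1)\cong\mathbb{W}\cong\mathbb{I}\oplus W$.

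Assembling the pieces gives $\mathbb{V}\cong\mathbb{U}\oplus\mathbb{W}\cong(\mathbb{I}\oplus\mathfrak{A}\oplus W)\oplus(\mathbb{I}\oplus W)=2(\mathbb{I}\oplus W)\oplus\mathfrak{A}$, as claimed, and the dimension count $7=2(1+2)+1$ provides a reassuring check. The one place I would be most careful—the main obstacle, such as it is—is the computation that $D$ transforms by the trivial rather than the sign character: overlooking that there are two Vandermonde factors (one in $x$, one in $y$) would twist $\mathbb{U}$ by $\mathfrak{A}$ and wrongly interchange the roles of $\mathbb{I}$ and $\mathfrak{A}$ in the tensor square. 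The hypotheses $char(k)\neq 2,3$ enter only to guarantee semisimplicity—$char\neq 3$ for the permutation representation and for $Sym^2 V(1)$, and $char\neq 2$ for the $\wedge^2\oplus Sym^2$ decomposition—so that the listed summands are genuine direct summands.
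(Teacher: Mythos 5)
Your proposal is correct and follows essentially the same route as the paper: the splitting $\mathbb{V}\cong\mathbb{U}\oplus\mathbb{W}$ via the evaluation map, the identification $\mathbb{U}=D\cdot V(1,1)\cong V(1)\otimes V(1)$, and the decomposition $V(1)\otimes V(1)\cong \wedge^2 V(1)\oplus \mathrm{Sym}^2 V(1)$ with $\mathrm{Sym}^2 V(1)\cong\mathbb{W}$. The only difference is that you make explicit the equivariance checks (for $ev$, and that the Vandermonde factor $D$ transforms trivially because the $x$- and $y$-signs cancel) which the paper leaves implicit; this is a welcome clarification but not a different argument.
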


Choose now a basis $(z_1,z_2,z_3,w_1,w_2,w_3,u)$ of $\mathbb{V}$,  such that the $z_i$'s and the
$w_i$'s are respective
 bases of $\mathbb{I} \oplus W$ consisting of eigenvectors of $\sigma = (123)$, and $u$ is a basis
element of  $\mathfrak{A}$. The eigenvalue of $z_i, w_i$ with respect to $\sigma = (123)$ is 
$\epsilon^{i-1}$, $u$ is $\sigma$-invariant and $(12)(u) = -u$.

Note that if $(v_1,v_2,v_3)$ is a basis of $\mathbb{I} \oplus W$, such that
$\mathfrak{S}_3$ acts by permutation of the indices, then 
$z_1 = v_1 + v_2 + v_3$, $z_2 = v_1 + \epsilon v_2 +
\epsilon^2 v_3$, $z_3 = v_1 + \epsilon^2 v_2 +
\epsilon v_3$, where $\epsilon$ is a primitive third root of unity.

\begin{oss} Since $z_1,w_1$ are $\mathfrak{S}_3$ - invariant, $\PP(V(4,4, - \mathcal{S}))
/\mathfrak{S}_3$ is birational to a product of the affine line with
$Spec(k[z_2,z_3,w_2,w_3,u]^{\mathfrak{S}_3})$, and therefore it suffices to compute
$k[z_2,z_3,w_2,w_3,u]^{\mathfrak{S}_3}$.
\end{oss}

Part 1 of the theorem follows now from the following

\begin{prop} Let $T:=z_2z_3$, $S:=z_2^3$, $A_1:=z_2w_3+z_3w_2$, $A_2:=z_2w_3-z_3w_2$. Then
\begin{multline*} k(z_2,z_3,w_2,w_3,u)^{\mathfrak{S}_3} 
\supset K:= \\ k(A_1,T,S+\frac{T^3}{S},u(S-\frac{T^3}{S}), A_2(S-\frac{T^3}{S})),
\end{multline*}

and $[k(z_2,z_3,w_2,w_3,u):K] = 6$, hence $ k(z_2,z_3,w_2,w_3,u)^{\mathfrak{S}_3} = K$.
\end{prop}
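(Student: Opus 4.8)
The plan is to prove the proposition in two steps: first verify the inclusion $K \subset k(z_2,z_3,w_2,w_3,u)^{\mathfrak{S}_3}$ by checking that each of the five proposed generators is genuinely $\mathfrak{S}_3$-invariant, and then establish that the full field $k(z_2,z_3,w_2,w_3,u)$ has degree exactly $6$ over $K$. Once both are in hand, since $[k(z_2,z_3,w_2,w_3,u):k(z_2,z_3,w_2,w_3,u)^{\mathfrak{S}_3}] = |\mathfrak{S}_3| = 6$ (the action being faithful), the tower $K \subset k(z_2,z_3,w_2,w_3,u)^{\mathfrak{S}_3} \subset k(z_2,z_3,w_2,w_3,u)$ forces $K = k(z_2,z_3,w_2,w_3,u)^{\mathfrak{S}_3}$.

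**Verifying invariance of the generators.**
First I would record the explicit action of the two generators $\sigma=(123)$ and $\tau=(12)$ on the chosen eigenbasis. By construction $\sigma$ scales $z_i,w_i$ by $\epsilon^{i-1}$ and fixes $u$, while $\tau$ swaps the $\epsilon$- and $\epsilon^2$-eigenlines, hence sends $z_2\leftrightarrow z_3$, $w_2\leftrightarrow w_3$, and $u\mapsto -u$ (using $(12)(u)=-u$). With these rules the invariance of each generator is a direct computation: $T=z_2z_3$ is $\sigma$-fixed (degrees $\epsilon\cdot\epsilon^2=1$) and visibly $\tau$-symmetric; $S=z_2^3$ is $\sigma$-fixed but not $\tau$-fixed, whereas $T^3/S=z_3^3$ is its $\tau$-image, so $S+T^3/S$ is invariant; $A_1=z_2w_3+z_3w_2$ is $\sigma$-fixed and $\tau$-symmetric; $A_2=z_2w_3-z_3w_2$ is $\sigma$-fixed but $\tau$-anti-invariant, and likewise $S-T^3/S$ is $\sigma$-fixed and $\tau$-anti-invariant, so both $u(S-T^3/S)$ and $A_2(S-T^3/S)$ are fixed by $\tau$ (two sign flips cancel) and by $\sigma$. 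This establishes $K\subset k(z_2,z_3,w_2,w_3,u)^{\mathfrak{S}_3}$.

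**The degree computation.**
The harder half is showing $[k(z_2,z_3,w_2,w_3,u):K]=6$. Since this degree is at most $6$ (as $K$ lies inside the index-$6$ invariant field, and equality of the two bounds is exactly what we want), it suffices to prove it is at least $6$, equivalently that the five generators of $K$ separate the generic $\mathfrak{S}_3$-orbit, i.e. that $K$ is not contained in any proper intermediate field. The cleanest route is to exhibit the five ratios as functions of $(z_2,z_3,w_2,w_3,u)$ and reconstruct the orbit. From $T$ and $S+T^3/S$ one recovers the pair $\{S,T^3/S\}=\{z_2^3,z_3^3\}$ as the two roots of a quadratic, hence $z_2^3$ up to the $\tau$-swap; combined with $T=z_2z_3$ this pins down $(z_2,z_3)$ up to the simultaneous cube-root ambiguity $(z_2,z_3)\mapsto(\epsilon z_2,\epsilon^{-1}z_3)$, which is precisely the $\sigma$-action — so the $(z_2,z_3)$-coordinate is determined exactly up to its full $\mathfrak{S}_3$-orbit. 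Then $A_1,A_2$ recover $z_2w_3$ and $z_3w_2$, hence $(w_2,w_3)$, in an $\mathfrak{S}_3$-equivariant way, and finally $u(S-T^3/S)$ together with the already-known $S-T^3/S$ recovers $u$ (the anti-invariant factor $S-T^3/S$ is generically nonzero, which is where I must check genericity carefully).

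**Main obstacle.**
I expect the genuine difficulty to lie in the last degree step, specifically in showing no relation lowers the degree below $6$ — that is, that the three anti-invariant-type generators $A_2(S-T^3/S)$, $u(S-T^3/S)$ are not redundant against the invariant ones. The delicate point is that $A_2$ and $u$ are each only \emph{anti}-invariant under $\tau$, so they enter $K$ only through the even combinations formed with $S-T^3/S$; I must confirm that these even combinations, together with $A_1$, still distinguish the two points of each $\tau$-fibre generically (rather than collapsing them), and that the vanishing loci of the auxiliary factors like $S-T^3/S$ and $z_2^3-z_3^3$ are proper subvarieties so that the reconstruction is valid on a dense open set. Once genericity is secured and the orbit is recovered as above, the degree is forced to be $6$ and the proposition follows.
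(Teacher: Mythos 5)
Your invariance verifications are sound in substance: strictly speaking the action is $\tau(z_2)=\epsilon z_3$, $\tau(z_3)=\epsilon^2 z_2$ (with the paper's normalization $z_2=v_1+\epsilon v_2+\epsilon^2 v_3$, $z_3=v_1+\epsilon^2 v_2+\epsilon v_3$), not the bare swap you assert, but the scalars cancel in all five generators, so the inclusion $K\subseteq F^{\mathfrak{S}_3}$, where $F:=k(z_2,z_3,w_2,w_3,u)$, stands. The genuine error is in the logic of your degree step: you have the two inequalities backwards. The containment $K\subseteq F^{\mathfrak{S}_3}$ yields
$$[F:K]=[F:F^{\mathfrak{S}_3}]\cdot[F^{\mathfrak{S}_3}:K]=6\,[F^{\mathfrak{S}_3}:K]\ \geq\ 6,$$
so what comes for free is ``at least $6$'', not ``at most $6$'' as you claim; consequently your statement ``it suffices to prove it is at least $6$'' is vacuous --- that inequality is automatic and by itself proves nothing. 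What must actually be proved is the \emph{upper} bound $[F:K]\leq 6$. Your orbit reconstruction is in fact aimed at the right target (it shows a generic fibre of the map given by the five generators contains at most one orbit, i.e.\ at most six points), but as organized, the proof asserts the needed bound by a fallacious argument and directs its effort at the direction that was never in doubt. A second, smaller defect: counting points in generic fibres controls only the \emph{separable} degree, so in characteristic $p>3$ (which this proposition must cover) your argument still allows $[F:K]=6p^e$ with $e>0$; some extra remark is needed to exclude inseparability.

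Both defects disappear if you algebraize your reconstruction as a tower of explicit algebraic extensions --- which is exactly the paper's own proof, read in the opposite order. Your formulas show $F=K(S,z_2)$: with $B:=S+\frac{T^3}{S}\in K$ one has $S-\frac{T^3}{S}=2S-B\in K(S)$, hence $u$ and $A_2$ (obtained by dividing the generators $u(S-\frac{T^3}{S})$, $A_2(S-\frac{T^3}{S})$ by $2S-B$) lie in $K(S)$, and then $z_3=T/z_2$, $w_3=(A_1+A_2)/(2z_2)$, $w_2=(A_1-A_2)/(2z_3)$ lie in $K(S,z_2)$. Since $S$ satisfies $X^2-BX+T^3=0$ over $K$ and $z_2$ satisfies $X^3-S=0$ over $K(S)$, this gives $[F:K]\leq 2\cdot 3=6$ in any characteristic $\neq 2,3$, whence equality and $K=F^{\mathfrak{S}_3}$. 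The paper runs the same algebra top-down: first $F^{\sigma}=L:=k(u,T,A_1,A_2,S)$ with $[F:L]=3$ (adjoining the cube root $z_2$ of $S$), then $L^{\tau}=K$ with $[L:K]\leq 2$ via the same quadratic $S^2-BS+T^3=0$.
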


\begin{proof} We first calculate the invariants under the action of $\sigma = (123)$, i.e.,
$k(z_2,z_3,w_2,w_3,u)^{\sigma}$. Note that $u$, $z_2z_3$, $z_2w_3$,
$w_2w_3$,$z_2^3$ are $\sigma$-invariant, and
$[k(z_2,z_3,w_2,w_3,u):k(u, z_2z_3, z_2w_3,w_2w_3,z_2^3)] =3$.
 In particular, $k(z_2,z_3,w_2,w_3,u)^{\sigma} = k(u, z_2z_3, z_2w_3,w_2w_3,z_2^3) =: L$.

Now, we calculate $L^{\tau}$, with $\tau = (12)$. We first observe that 
$L= k(T,A_1,A_2,S,u)$. Since $\tau(z_2) =
\epsilon z_3$, $\tau(z_3) = \epsilon^2 z_2$ (and similarly for $w_2$, $w_3$), we see that
$\tau(A_1) = A_1$ and
$\tau(T) = T$. On the other hand, $\tau(u) = -u$, $\tau(A_2) = -A_2$, $\tau(S) = \frac{T^3}{S}$. 

\noindent {\bf Claim.}

\noindent
$L^{\tau} = k(A_1,T,S+\frac{T^3}{S},u(S-\frac{T^3}{S}), A_2(S-\frac{T^3}{S})) =: E$.

\medskip

\noindent {\it Proof of the Claim.}  Obviously
  $A_1$,$T$,$S+\frac{T^3}{S}$,$u(S-\frac{T^3}{S})$, $A_2(S-\frac{T^3}{S})$ are
  invariant under $\tau$, whence $E \subset L^{\tau}$. Since $L = E(S)$, using the equation $B
  \cdot S = S^2 + T^3$ for $B:= S+\frac{T^3}{S}$, we get that $[E(S):E] \leq 2$.

This proves the claim and the proposition.
\end{proof}

There remains to show the second part of the theorem. 

We denote by $\tau'$ the involution on
$k(z_1,z_2,z_3,w_1,w_2,w_3,u)$ induced by the involution $(x,y) \mapsto (y,x)$ on $\PP^1 \times
\PP^1$. It suffices to prove the following
\begin{prop}\label{sigma}
$E^{\tau'} = k(A_1,T,S+\frac{T^3}{S},(u(S-\frac{T^3}{S}))^2, A_2(S-\frac{T^3}{S}))$.
\end{prop}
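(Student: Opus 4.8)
The plan is to understand exactly how the involution $\tau'$ acts on the generators of the field $E$ computed in the previous proposition, and then to apply the same strategy used there: exhibit an explicit subfield of $\tau'$-invariants and bound the degree of the extension by $2$. First I would determine the action of $\tau'$ on the basis vectors $z_i, w_i, u$. Since $\tau'$ is induced by the coordinate exchange $(x,y)\mapsto (y,x)$ on $\PP^1\times\PP^1$, and recalling from the decomposition above that $\ZZ/2\ZZ$ acts trivially on $Sym^2(V(1))\cong\mathbb{W}$ but by $-1$ on $\wedge^2(V(1))=\mathfrak{A}$, I expect that $\tau'$ fixes all the $z_i$ and $w_i$ (the eigenvectors lying in copies of $\mathbb{I}\oplus W$) and sends $u\mapsto -u$ (the alternating generator). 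One must check that $\tau'$ commutes appropriately with the $\mathfrak{S}_3$-action so that it descends to an involution of $E=L^{\tau}$; this follows because $\sigma=(123)$ and $\tau=(12)$ act through the diagonal $\mathfrak{S}_3\subset\PP GL(2,k)^2$, which commutes with the coordinate swap.

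Granting that $\tau'$ fixes $z_2,z_3,w_2,w_3$ and negates $u$, I would then read off its effect on the five generators of $E$. The quantities $A_1=z_2w_3+z_3w_2$, $T=z_2z_3$, $S=z_2^3$, and hence $A_2=z_2w_3-z_3w_2$ and $B:=S+\tfrac{T^3}{S}$, are all built from the $z_i,w_i$ alone and are therefore $\tau'$-invariant. The only generator of $E$ on which $\tau'$ acts nontrivially is the one containing $u$, namely $u(S-\tfrac{T^3}{S})$, which $\tau'$ sends to its negative. Consequently the obvious invariants are $A_1$, $T$, $B=S+\tfrac{T^3}{S}$, $A_2(S-\tfrac{T^3}{S})$, and the square $\bigl(u(S-\tfrac{T^3}{S})\bigr)^2$, which is exactly the field proposed in the statement.

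It then remains to verify the degree count. Writing $F$ for the proposed invariant field, one has $F\subset E^{\tau'}\subset E$, and since $\tau'$ is an involution that acts nontrivially on $E$ (it negates $u(S-\tfrac{T^3}{S})$, which is a nonzero element), the extension $E/E^{\tau'}$ has degree exactly $2$. Thus it suffices to show $[E:F]\le 2$, which forces $F=E^{\tau'}$. This is immediate because $E$ is generated over $F$ by the single element $g:=u(S-\tfrac{T^3}{S})$, and $g$ satisfies the quadratic relation $g^2=\bigl(u(S-\tfrac{T^3}{S})\bigr)^2\in F$; the remaining four generators $A_1,T,S+\tfrac{T^3}{S},A_2(S-\tfrac{T^3}{S})$ of $E$ already lie in $F$.

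I expect the main obstacle to be the very first step: correctly pinning down the action of $\tau'$ on the chosen eigenbasis and confirming that it genuinely fixes each $z_i,w_i$ and negates $u$. This requires care because the eigenbasis was adapted to the $\sigma=(123)$ action rather than to the coordinate swap, so one must check compatibility of the two commuting involutions and make sure no sign or index subtlety changes $A_2(S-\tfrac{T^3}{S})$ from invariant to anti-invariant. Once the action on generators is secured, the remainder is the same short degree-$2$ argument as in the previous proposition and presents no real difficulty.
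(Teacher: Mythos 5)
Your proposal is correct and takes essentially the same route as the paper: the paper also reduces everything to the claim that $\tau'$ fixes all the $z_i,w_i$ and sends $u\mapsto -u$ (stated there as a separate lemma, proved exactly as you suggest, via the triviality of the coordinate swap on $\mathbb{W}$ and on $Sym^2(V(1))$ versus the sign $-1$ on $\wedge^2(V(1))=\mathfrak{A}$, using that the prefactor $x_0x_1(x_1-x_0)y_0y_1(y_1-y_0)$ is swap-invariant), and then concludes by the same bound $[E:F]\le 2$. Your explicit remark that $\tau'$ acts nontrivially on $E$ (so that $E^{\tau'}\neq E$) makes fully precise a step the paper leaves implicit.
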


\begin{proof} Since $[E:k(A_1,T,S+\frac{T^3}{S},(u(S-\frac{T^3}{S}))^2, A_2(S-\frac{T^3}{S}))]
\leq 2$, it suffices to show that the $5$ generators
$A_1$, $T$,$S+\frac{T^3}{S}$,$(u(S-\frac{T^3}{S}))^2$, $A_2(S-\frac{T^3}{S})$ are $\tau '$ -
invariant.  This will now  be proven in lemma (\ref{tau'}).
\end{proof}

\begin{lem}\label{tau'} $\tau'$ acts as the identity on
$(z_1,z_2,z_3,w_1,w_2,w_3)$ and sends $u \mapsto -u$.
\end{lem}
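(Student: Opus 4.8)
The plan is to exploit the fact that $\tau'$, being induced by the coordinate swap $(x,y)\mapsto(y,x)$, commutes with the diagonal action of $\mathfrak{S}_3$ on $\PP^1\times\PP^1$, hence with the induced $\mathfrak{S}_3$-action on $\mathbb{V}$. Consequently $\tau'$ preserves the $\mathfrak{S}_3$-isotypic decomposition $\mathbb{V}\cong 2(\mathbb{I}\oplus W)\oplus\mathfrak{A}$. Since the $z_i$ and the $w_i$ form bases of the two copies of $\mathbb{I}\oplus W$ and $u$ spans $\mathfrak{A}$, it suffices to show that $\tau'$ acts as the identity on each copy of $\mathbb{I}\oplus W$ (as a whole subspace, which then forces every $z_i$ and $w_i$ to be fixed) and as multiplication by $-1$ on $\mathfrak{A}$.

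For the copy $\mathbb{W}$ I would simply observe that the three generators $f_{11}$, $f_{\infty\infty}$, $f_{00}$ are manifestly symmetric under the exchange $x\leftrightarrow y$, each being a product of an expression in the $x$-variables with the identical expression in the $y$-variables. Hence $\tau'$ fixes them pointwise and acts as the identity on $\mathbb{W}$.

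For the remaining copy of $\mathbb{I}\oplus W$ and the copy of $\mathfrak{A}$, both of which sit inside $\mathbb{U}$, recall that $\mathbb{U} = x_0 x_1 (x_1 - x_0)\, y_0 y_1 (y_0 - y_1)\cdot V(1,1)$ with $V(1,1) = Sym^2(V(1))\oplus\wedge^2(V(1))$. First I would check that the prefactor $x_0 x_1 (x_1 - x_0)\, y_0 y_1 (y_0 - y_1)$ is $\tau'$-invariant: swapping $x$ and $y$ turns $(x_1-x_0)$ into $(y_1-y_0)=-(y_0-y_1)$ and $(y_0-y_1)$ into $(x_0-x_1)=-(x_1-x_0)$, so the two sign changes cancel and the prefactor is fixed. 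Therefore $\tau'$ acts on $\mathbb{U}$ exactly as it acts on $V(1,1)$ by interchanging the two tensor factors, which by the observation recorded earlier is the identity on $Sym^2(V(1))\cong\mathbb{I}\oplus W$ and multiplication by $-1$ on $\wedge^2(V(1))\cong\mathfrak{A}$.

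Combining the three computations yields the lemma: $\tau'$ fixes every element of the two $\mathbb{I}\oplus W$ summands, hence all of $z_1,\dots,w_3$, and sends the generator $u$ of $\mathfrak{A}$ to $-u$. The argument is entirely routine; the only point requiring a moment's care is the cancellation of the two signs in the prefactor, since this is precisely what guarantees that the $\tau'$-action on $\mathbb{U}$ is governed by the factor-swap on $V(1,1)$ alone and is not twisted by an extra global sign, which would otherwise reverse the roles of $Sym^2$ and $\wedge^2$.
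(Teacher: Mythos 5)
Your proof is correct and takes essentially the same route as the paper's: triviality of $\tau'$ on $\mathbb{W}$ (the $f_{ii}$ being symmetric under exchange of $x$ and $y$), invariance of the prefactor $x_0x_1(x_1-x_0)\,y_0y_1(y_0-y_1)$, and the identification of the $\tau'$-action on $\mathbb{U}$ with the factor swap on $V(1,1)$, which is the identity on $Sym^2(V(1))$ and multiplication by $-1$ on $\wedge^2(V(1))$ spanned by $u$. Your extra framing (commutation with the $\mathfrak{S}_3$-action, the explicit sign cancellation in the prefactor) merely spells out details the paper leaves implicit.
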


\begin{proof} We note first that $\tau '$ acts trivially on  the subspace $\mathbb{W}$ generated
by the polynomials $ f_{ii}$. 

Since $\mathbb{U} = x_0x_1(x_1-x_0)y_0y_1(y_1-y_0) V(1,1)$ and $x_0x_1(x_1-x_0)y_0y_1(y_1-y_0)$
 is invariant under exchanging $x$ and
$y$, it suffices to recall that  the action of
$\tau '$ on $V(1,1) = V(1) \otimes V(1)$ is the identity on the subspace $ Sym^2 (V(1))$, while
the action on the alternating
$\mathfrak{S}_3$ - submodule $\mathfrak{A}$ sends the generator $u$  to $-u$. 
\end{proof}

\subsection{$Char(k) = 3$}

In order to prove theorem (\ref{rat}) if the characteristic of $k$ is equal to $3$ we describe the
$\mathfrak{S}_3$ -module $\mathbb{V}$ as follows:
$$ \mathbb{V} \cong  2\mathbb{W} \oplus \mathfrak{A},
$$ where $\mathbb{W}$ is the ($3$-dimensional) permutation representation of $\mathfrak{S}_3$.

Let now $z_1, z_2,z_3,w_1,w_2,w_3,u$ be a basis of $\mathbb{V}$ such that the action of
$\mathfrak{S}_3$ permutes $z_1,z_2,z_3$ (resp. $w_1,w_2,w_3$), and $(123) : u \mapsto u$, $(12) Ö
u \mapsto -u$. Then we have:

\begin{prop}
 The $\mathfrak{S}_3$- invariant subfield $k(\mathbb{V})^{\mathfrak{S}_3}$ of $k(\mathbb{V})$ is
rational.

More precisely, the seven $\mathfrak{S}_3$ - invariant functions 
$$\sigma_1 = z_1 + z_2 + z_3,$$
$$\sigma_2 = z_1z_2 + z_1z_3+z_2z_3,$$
$$\sigma_3 = z_1 z_2 z_3,$$
$$\sigma_4 = z_1w_1 + z_2w_2+z_3w_3,$$
$$\sigma_5 = w_1z_2z_3 + w_2z_1z_3 + w_3z_1z_2,$$
$$\sigma_6 = w_1(z_2+z_3) + w_2(z_1+z_3) + w_3(z_1+z_2),$$
$$\sigma_7 = u(z_1(w_2-w_3) + z_2(w_3-w_1) + z_3(w_1-w_2))$$ form a basis of the purely
transcendental extension over $k$.
\end{prop}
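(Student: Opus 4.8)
The plan is to identify the invariant field $k(\mathbb{V})^{\mathfrak{S}_3}$ with $k(\sigma_1,\dots,\sigma_7)$ by a degree count based on Artin's theorem: since $\mathfrak{S}_3$ acts faithfully on $\mathbb{V}$ (it already permutes the basis $z_1,z_2,z_3$ faithfully), the extension $k(\mathbb{V})/k(\mathbb{V})^{\mathfrak{S}_3}$ is Galois of degree $|\mathfrak{S}_3|=6$. So it suffices to produce an intermediate field $k(\sigma_1,\dots,\sigma_7)\subseteq k(\mathbb{V})^{\mathfrak{S}_3}$ over which $k(\mathbb{V})$ has degree exactly $6$; the tower law then squeezes the middle extension down to degree $1$.

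First I would check that each $\sigma_i$ is genuinely $\mathfrak{S}_3$-invariant. For $\sigma_1,\dots,\sigma_6$ this is immediate, since $\mathfrak{S}_3$ permutes $(z_1,z_2,z_3)$ and $(w_1,w_2,w_3)$ simultaneously and these are the standard symmetric combinations of the two triples. For $\sigma_7$ one writes its variable factor as the determinant $D:=z_1(w_2-w_3)+z_2(w_3-w_1)+z_3(w_1-w_2)=\det\bigl(\begin{smallmatrix}1&1&1\\ z_1&z_2&z_3\\ w_1&w_2&w_3\end{smallmatrix}\bigr)$, which transforms by the sign character (a transposition swaps two columns, a $3$-cycle permutes them cyclically), exactly as $u$ does; hence $\sigma_7=uD$ is invariant.

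The heart of the argument, and the step I expect to be the main obstacle, is to show that $w_1,w_2,w_3,u$ can be recovered rationally from the $\sigma_i$ together with $z_1,z_2,z_3$, i.e. that $k(\mathbb{V})=k(\sigma_1,\dots,\sigma_7)(z_1,z_2,z_3)$. Over the subfield $k(z_1,z_2,z_3)$ the three invariants $\sigma_4,\sigma_5,\sigma_6$ are linear forms in $w_1,w_2,w_3$ with coefficient matrix
$$ M=\begin{pmatrix} z_1 & z_2 & z_3 \\ z_2z_3 & z_1z_3 & z_1z_2 \\ z_2+z_3 & z_1+z_3 & z_1+z_2 \end{pmatrix}. $$
Subtracting nothing but replacing the third row by (third row $+$ first row) yields the constant row $\sigma_1(1,1,1)$, and a short expansion gives $\det M=\pm\,\sigma_1\prod_{i<j}(z_i-z_j)$. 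This is a nonzero element of $k(z_1,z_2,z_3)$ in every characteristic — and this is precisely where I must be careful for $\operatorname{char}(k)=3$: the primes $2,3$ enter only through numerical coefficients, never through the Vandermonde factors $z_i-z_j$ nor through $\sigma_1$, all of which remain nonzero because $z_1,z_2,z_3$ are algebraically independent over $k$. By Cramer's rule $w_1,w_2,w_3\in k(z_1,z_2,z_3,\sigma_4,\sigma_5,\sigma_6)$; consequently $D$ lies in this field and $D\neq0$ (again by algebraic independence), so $u=\sigma_7/D$ lies in $k(z_1,z_2,z_3,\sigma_4,\dots,\sigma_7)$. This establishes $k(\mathbb{V})=k(z_1,z_2,z_3)(\sigma_4,\dots,\sigma_7)=k(\sigma_1,\dots,\sigma_7)(z_1,z_2,z_3)$.

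It remains to count degrees and conclude. Since $z_1,z_2,z_3$ are the roots of $t^3-\sigma_1t^2+\sigma_2t-\sigma_3\in k(\sigma_1,\dots,\sigma_7)[t]$, the field $k(\mathbb{V})$ sits inside a splitting field of this cubic over $k(\sigma_1,\dots,\sigma_7)$, so $[k(\mathbb{V}):k(\sigma_1,\dots,\sigma_7)]$ divides $6$. Combining with $[k(\mathbb{V}):k(\mathbb{V})^{\mathfrak{S}_3}]=6$ and the inclusions $k(\sigma_1,\dots,\sigma_7)\subseteq k(\mathbb{V})^{\mathfrak{S}_3}\subseteq k(\mathbb{V})$, multiplicativity of degrees forces $[k(\mathbb{V}):k(\sigma_1,\dots,\sigma_7)]=6$ and therefore $k(\mathbb{V})^{\mathfrak{S}_3}=k(\sigma_1,\dots,\sigma_7)$. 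Finally, this field is generated over $k$ by the seven elements $\sigma_1,\dots,\sigma_7$ and has transcendence degree $7$ (equal to that of $k(\mathbb{V})$, the extension being finite), so the $\sigma_i$ are necessarily algebraically independent and form a purely transcendental basis, proving rationality.
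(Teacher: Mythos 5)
Your proof is correct, and it takes a genuinely different route from the paper's. The paper argues geometrically: it shows that $\psi=(\sigma_1,\dots,\sigma_7):\mathbb{V}\to\mathbb{A}^7_k$ separates generic $\mathfrak{S}_3$-orbits, citing Lemma (2.2) of \cite{fabrat} to reduce (after replacing $x'$ by $\tau(x')$) to the case where the first six coordinates of $x$ and $x'$ coincide, and then uses the linearity of $\sigma_7$ in $u$ to conclude $u=u'$ wherever $B(x_1,\dots,x_6)\neq 0$; this exhibits $\overline{\psi}:\mathbb{V}/\mathfrak{S}_3\dashrightarrow\mathbb{A}^7_k$ as generically injective, hence birational. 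You instead work purely with fields: Artin's lemma gives $[k(\mathbb{V}):k(\mathbb{V})^{\mathfrak{S}_3}]=6$; your Cramer's-rule inversion, justified by $\det M=\pm\,\sigma_1\prod_{i<j}(z_i-z_j)\neq 0$ (which indeed survives characteristic $3$), shows $k(\mathbb{V})=k(\sigma_1,\dots,\sigma_7)(z_1,z_2,z_3)$; since the $z_i$ are the distinct roots of the separable cubic $t^3-\sigma_1t^2+\sigma_2t-\sigma_3$, that extension has degree dividing $6$, so the tower law squeezes $k(\mathbb{V})^{\mathfrak{S}_3}$ down to $k(\sigma_1,\dots,\sigma_7)$, and the transcendence-degree count yields algebraic independence of the $\sigma_i$. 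Your explicit recovery of $w_1,w_2,w_3$ and $u$ is exactly the content the paper delegates to the cited lemma plus its $B\neq 0$ argument, so the two proofs share the same underlying mechanism; but your version is self-contained, and it has a technical advantage in positive characteristic: set-theoretic generic injectivity of a morphism does not by itself imply birationality (Frobenius is the standard counterexample), so the paper's formulation leaves a small separability point implicit, whereas your Galois-theoretic argument establishes the equality of invariant fields outright. What the paper's approach buys in exchange is brevity, given the reference, and the explicit geometric statement that the $\sigma_i$ realize $\mathbb{V}/\mathfrak{S}_3$ birationally as affine $7$-space.
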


\begin{proof}
 $\sigma_1, \ldots , \sigma_7$ determine a morphism $\psi : \mathbb{V} \rightarrow
\mathbb{A}^7_k$. We will show that $\psi$ induces a birational map $\bar{\psi} : \mathbb{V} /
\mathfrak{S}_3 \rightarrow \mathbb{A}^7_k$, i.e., for a Zariski open set of $\mathbb{V}$ we have:
$\psi(x) = \psi(x')$ if and only if there is a $\tau \in \mathfrak{S}_3$ such that $x=\tau(x')$.
By \cite{fabrat}, lemma (2.2), we can assume (after acting on $x$ with a suitable $\tau \in
\mathfrak{S}_3$), that $x_i = x_i'$ for $1 \leq i \leq 6$, and we know that (setting $u:=x_7$,
$u':= x_7'$) 
\begin{multline*} u(x_1(x_5-x_6) + x_2(x_6-x_4)+x_3(x_4-x_5)) = \\ u' (x_1(x_5-x_6) +
x_2(x_6-x_4)+x_3(x_4-x_5)).
\end{multline*} Therefore, if $B(x_1, \ldots , x_6):= x_1(x_5-x_6) + x_2(x_6-x_4)+x_3(x_4-x_5)
\neq 0$, this implies that $u = u'$.
\end{proof}

Therefore, we have shown part 1 of theorem (\ref{rat}). 

We denote again by $\tau'$ the involution on
$k(z_1,z_2,z_3,w_1,w_2,w_3,u)$ induced by the involution $(x,y) \mapsto (y,x)$ on $\PP^1 \times
\PP^1$. In order to prove part 2) of thm. (\ref{rat}), it sufficies to observe that $\sigma_1,
\ldots, \sigma_6, \sigma_7^2$ are invariant under $\tau'$ and $[k(\sigma_1, \ldots , \sigma_7) :
k(\sigma_1, \ldots , \sigma_7^2)] \leq 2$, whence
$(k(\mathbb{V})^{\mathfrak{S}_3})^{(\mathbb{Z}/2\mathbb{Z})} = k(\sigma_1, \ldots , \sigma_7^2)$.
This proves theorem (\ref{rat}).

\subsection{$Char(k) = 2$} Let $k$ be an algebraically closed field of characteristic $2$. Then we
can describe the $\mathfrak{S}_3$ - module $\mathbb{V}$ as follows:
$$
\mathbb{V} \cong  \mathbb{W} \oplus V(1,1),
$$ where $\mathbb{W}$ is the ($3$ - dimensional) permutation representation of $\mathfrak{S}_3$.
We denote a basis of $\mathbb{W}$ by $z_1,z_2,z_3$. As in the beginning of the chapter,  $V(1,1) =
H^0(\PP^1 \times
\PP^1,\hol_{\PP^1 \times \PP^1}(1,1))$. We choose the following basis of $V(1,1)$: $w_1:=x_1y_1$,
$w_2:= (x_0 + x_1)(y_0 + y_1)$, $w_3:=x_0y_0$, $w:=x_0y_1$. Then $\mathfrak{S}_3$ acts on $w_1,
w_2, w_3$ by permutation of the indices and 
$$ (1,2): w \mapsto w+w_3, 
$$
$$ (1,2,3): w \mapsto w+w_2+w_3.
$$ Let $\epsilon \in k$ be a nontrivial third root of unity. Then Theorem (\ref{rat}) (in
characteristic $2$) follows from the following result:

\begin{prop}\label{invchar2} Let $k$ be an algebraically closed field of characteristic $2$. Let
$\sigma_1, \ldots, \sigma_6$ be as defined in (\ref{sigma}) and set 
$$ v:= (w+w_2)(w_1+\epsilon w_2 + \epsilon ^2 w_3) + (w+w_1 + w_3)(w_1+\epsilon^2 w_2 + \epsilon
w_3),
$$
$$ t:=(w+w_2)(w+w_1 + w_3).
$$ Then
\begin{itemize}
\item[1)] $k(z_1,z_2,z_3,w_1,w_2,w_3,w)^{\mathfrak{S}_3} = k(\sigma_1, \ldots , \sigma_6,v)$; \\
\item[2)] $k(z_1,z_2,z_3,w_1,w_2,w_3,w)^{\mathfrak{S}_3 \times \mathbb{Z}/2 \mathbb{Z}} =
k(\sigma_1, \ldots , \sigma_6,t)$.
\end{itemize}
\end{prop}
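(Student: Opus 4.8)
The plan is to reduce both assertions to a field-degree count resting on the invariant theory of the two permutation modules $\langle z_1,z_2,z_3\rangle$ and $\langle w_1,w_2,w_3\rangle$. Set $F:=k(z_1,z_2,z_3,w_1,w_2,w_3)$, so that $k(\mathbb{V})=F(w)$ with $w$ transcendental over $F$. Here $\mathfrak{S}_3$ permutes the two triples simultaneously, acts on $w$ by the \emph{affine} rule $g(w)=w+\ell_g$ with $\ell_g\in\langle w_1,w_2,w_3\rangle\subset F$ (for instance $\ell_{(123)}=w_2+w_3$), while the generator $\tau'$ of $\ZZ/2\ZZ$ fixes $F$ pointwise, sends $w\mapsto w+w_1+w_2+w_3$, and commutes with the $\mathfrak{S}_3$-action, so that $\mathfrak{S}_3\times\ZZ/2\ZZ$ does act on $k(\mathbb{V})$. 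The first step is the base case $F^{\mathfrak{S}_3}=k(\sigma_1,\ldots,\sigma_6)$ with $[F:F^{\mathfrak{S}_3}]=6$: this is precisely the characteristic-$3$ situation with the seventh coordinate deleted, and it follows in the same manner from \cite{fabrat}, lemma~(2.2), the point being that $\sigma_1,\ldots,\sigma_6$ separate the generic $\mathfrak{S}_3$-orbit of $\mathrm{Spec}\,F$. This argument uses no averaging, hence remains valid in characteristic $2$.

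Next I would verify the invariance of the two distinguished functions and determine their degree in $w$. Writing $\alpha:=w+w_2$ and $\beta:=w+w_1+w_3$, a direct computation with the action formulas shows that $(123)$ fixes both $\alpha$ and $\beta$, while $(12)$ and the coordinate swap $\tau'$ each interchange them; since $t=\alpha\beta$, this renders $t$ invariant under all of $\mathfrak{S}_3\times\ZZ/2\ZZ$. Expanding, $t=w^2+(w_1+w_2+w_3)w+w_2(w_1+w_3)$ is quadratic in $w$ with leading coefficient $1$, so $t$ is transcendental over $F$ and $w$ satisfies a degree-$2$ equation over $F(t)$. For part~(1) one checks, by the analogous but more involved computation with the Lagrange resolvents $w_1+\epsilon w_2+\epsilon^2 w_3$ and $w_1+\epsilon^2 w_2+\epsilon w_3$, that $v$ is $\mathfrak{S}_3$-invariant; here $v$ is \emph{linear} in $w$ with leading coefficient $w_2+w_3\in F^{\times}$, so $w\in F(v)$ and hence $F(v)=F(w)=k(\mathbb{V})$.

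The two degree counts then close the argument. For part~(1): since $v$ is transcendental over $k(\sigma_1,\ldots,\sigma_6)$ and $F(v)=k(\mathbb{V})$, adjoining the transcendental $v$ preserves the degree of the algebraic extension, giving $[k(\mathbb{V}):k(\sigma_1,\ldots,\sigma_6,v)]=[F:F^{\mathfrak{S}_3}]=6=|\mathfrak{S}_3|$; as $k(\sigma_1,\ldots,\sigma_6,v)$ lies in $k(\mathbb{V})^{\mathfrak{S}_3}$ and has the same index there, the two fields coincide. For part~(2): the tower $k(\mathbb{V})=F(w)\supset F(t)\supset k(\sigma_1,\ldots,\sigma_6,t)$ has successive degrees $2$ and $6$, hence total degree $12=|\mathfrak{S}_3\times\ZZ/2\ZZ|$, and the same index comparison yields $k(\mathbb{V})^{\mathfrak{S}_3\times\ZZ/2\ZZ}=k(\sigma_1,\ldots,\sigma_6,t)$. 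In both cases the resulting field is generated by seven algebraically independent elements, hence is rational over $k$.

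The step I expect to be the main obstacle is the invariance verification in characteristic~$2$. Because $\mathfrak{S}_3$ acts on $w$ affinely rather than linearly, the expansions of $g(v)$ and $g(t)$ produce many monomials whose cancellation depends simultaneously on reduction modulo $2$ and on the relation $1+\epsilon+\epsilon^2=0$; organizing the bookkeeping through the auxiliary quantities $\alpha,\beta$ is what keeps the computation tractable, and the check for $v$ is appreciably more delicate than that for $t$. A secondary point requiring care is that $\sigma_1,\ldots,\sigma_6$ must be shown to \emph{generate} $F^{\mathfrak{S}_3}$ and not merely to form a transcendence basis of it, which is exactly why the base case is routed through \cite{fabrat} lemma~(2.2) rather than through any characteristic-zero averaging.
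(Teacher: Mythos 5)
Your skeleton --- verify invariance of the distinguished functions, then compare $[k(\mathbb{V}):k(\sigma_1,\ldots,\sigma_6,\cdot)]$ with the group order via $F^{\mathfrak{S}_3}=k(\sigma_1,\ldots,\sigma_6)$, $F:=k(z_1,z_2,z_3,w_1,w_2,w_3)$ --- is exactly the paper's, and your part (2) is correct: $(123)$ fixes $\alpha=w+w_2$ and $\beta=w+w_1+w_3$, while $(12)$ and $\tau'$ interchange them, so $t=\alpha\beta$ is invariant under $\mathfrak{S}_3\times\mathbb{Z}/2\mathbb{Z}$, and the degree count $2\cdot 6=12$ closes that half of the argument just as in the paper.

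Part (1), however, contains a genuine error: the function $v$ as printed in the statement is \emph{not} $\mathfrak{S}_3$-invariant, so the verification you assert (``one checks \dots that $v$ is $\mathfrak{S}_3$-invariant'') actually fails. Your own two claims already contradict each other: every $g\in\mathfrak{S}_3$ acts affinely on $w$ over $F$, $g(w)=w+\ell_g$ with $\ell_g\in F$, and preserves $F$; hence if $v=cw+d$ with $c,d\in F$ is invariant, comparing coefficients of $w$ (legitimate, as $w$ is transcendental over $F$) forces $g(c)=c$ for all $g$, i.e.\ the leading coefficient must be a symmetric function. But you correctly computed $c=w_2+w_3$, which $(12)$ sends to $w_1+w_3$. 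Concretely, with $(12)\colon w_1\leftrightarrow w_2$, $w\mapsto w+w_3$, evaluation at $(w_1,w_2,w_3,w)=(1,0,0,0)$ gives $v=1$ while the transformed polynomial gives $\epsilon$. The explanation is a typo in the statement which the paper's own proof silently corrects: the proof argues with $W_2^3$ and $W_3^3$, where $W_2=w_1+\epsilon w_2+\epsilon^2 w_3$, $W_3=w_1+\epsilon^2 w_2+\epsilon w_3$, so the intended definition is $v=(w+w_2)W_2^3+(w+w_1+w_3)W_3^3$. That $v$ \emph{is} invariant: $(123)$ multiplies $W_2,W_3$ by $\epsilon^2,\epsilon$ respectively (so fixes their cubes) and fixes $\alpha,\beta$, while $(12)$ exchanges $W_2^3\leftrightarrow W_3^3$ and $\alpha\leftrightarrow\beta$. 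With the cubes inserted your argument then goes through verbatim: $v$ is still linear in $w$, now with leading coefficient $W_2^3+W_3^3=(w_1+w_2)(w_2+w_3)(w_3+w_1)\neq 0$, which is symmetric as the general principle above demands, so $F(v)=F(w)$ and your index count $6=|\mathfrak{S}_3|$ finishes. As it stands, though, your write-up asserts that a computation succeeds which in fact fails, and in doing so it ``proves'' a false statement about the printed $v$; a blind reconstruction needs to detect and repair this inconsistency, which is visible purely from the two facts you state about $v$.
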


In particular, the respective invariant subfields of $k(\mathbb{V})$ are generated by purely
transcendental elements, and this proves theorem (\ref{rat}).

\begin{proof}[Proof of (\ref{invchar2}).] 2) We observe that $\mathbb{Z}/2 \mathbb{Z}$ ($x_i
\mapsto y_i$) acts trivially on $z_1,z_2,z_3,w_1,w_2,w_3$ and maps $w$ to $w + w_1+w_2+w_3$. It is
now easy to see that $t$ is invariant under the action of $\mathfrak{S}_3 \times \mathbb{Z}/2
\mathbb{Z}$. Therefore $k(\sigma_1, \ldots , \sigma_6,t) \subset
K:=k(z_1,z_2,z_3,w_1,w_2,w_3,w)^{\mathfrak{S}_3 \times \mathbb{Z}/2 \mathbb{Z}}$. By
\cite{fabrat}, lemma (2.8), $[k(z_1,z_2,z_3,w_1,w_2,w_3,t):k(\sigma_1, \ldots , \sigma_6,t)] = 6$,
and obviously, $[k(z_1,z_2,z_3,w_1,w_2,w_3,w): k(z_1,z_2,z_3,w_1,w_2,w_3,t)] = 2$. Therefore
$[k(z_1,z_2,z_3,w_1,w_2,w_3,w):k(\sigma_1, \ldots , \sigma_6,t)] = 12$, whence $K = k(\sigma_1,
\ldots , \sigma_6,t)$.

1) Note that for $W_2:=w_1+\epsilon w_2 + \epsilon ^2 w_3$,  $W_3:=w_1+\epsilon^2 w_2 + \epsilon
^3 w_3$, we have: $W_2^3$ and $W_3^3$ are invariant under $(1,2,3)$ and are exchanged under
$(1,2)$. Therefore $v$ is invariant under the action of $\mathfrak{S}_3$ and we have seen that
$k(\sigma_1, \ldots , \sigma_6,v) \subset L:= k(z_1,z_2,z_3,w_1,w_2,w_3,w)^{\mathfrak{S}_3}$, in
particular $[k(z_1,z_2,z_3,w_1,w_2,w_3,w):k(\sigma_1, \ldots , \sigma_6,v)] \geq 6$. On the other
hand, note that $k(z_1,z_2,z_3,w_1,w_2,w_3,w) = k(z_1,z_2,z_3,w_1,w_2,w_3,v)]$ (since $v$ is
linear in $w$) and again, by \cite{fabrat}, lemma (2.8), $[k(z_i,w_i,v): k(\sigma_1, \ldots ,
\sigma_6,v)] = 6$. This implies that $L=k(\sigma_1, \ldots , \sigma_6,v)$. 
\end{proof}

\end{document}